\documentclass[12 pt]{amsart}
\usepackage{amsmath,times,epsfig,amssymb,amsbsy,amscd,amsfonts,amstext,graphicx,color,bm,amsthm}
\usepackage[all]{xy}
\usepackage{graphicx}

\setcounter{MaxMatrixCols}{20} 
\usepackage[urlcolor=blue]{hyperref}
\usepackage{tikz,subfigure}

\newcommand{\C}{\mathbb{C}}

\newcommand{\R}{\mathbb{R}}

\renewcommand{\to}{\longrightarrow}

\newtheorem{Theorem}{Theorem}[section]
\newtheorem{Definition}[Theorem]{Definition}

\newtheorem{Lemma}[Theorem]{Lemma}
\newtheorem{Proposition}[Theorem]{Proposition}
\newtheorem{Corollary}[Theorem]{Corollary}
\newtheorem{Remark}[Theorem]{Remark}
\newtheorem{Example}[Theorem]{Example}

\DeclareMathOperator{\Der}{Der}

\DeclareMathOperator{\depth}{depth}
\DeclareMathOperator{\hdim}{hdim}
\DeclareMathOperator{\pdeg}{pdeg}

\DeclareMathOperator{\codim}{codim}

\def \X(#1){\{x_1,\dots, x_{#1}\}}

\def \degrevlex{{\rm degrevlex}}

\DeclareMathOperator{\LM}{LM}
\DeclareMathOperator{\LC}{LC}
\DeclareMathOperator{\rk}{rk}

\newcommand{\A}{\mathcal{A}}

\DeclareMathOperator{\charact}{char}


\begin{document}

\title{Free hyperplane arrangements over arbitrary fields}

\begin{abstract}
In this paper, we study the class of free hyperplane arrangements. Specifically, we investigate the relations between freeness over a field of finite characteristic
and freeness over $\mathbb{Q}$.
\end{abstract}

\author{Elisa Palezzato}
\address{Elisa Palezzato, Department of Mathematics, Hokkaido University, Kita 10, Nishi 8, Kita-Ku, Sapporo 060-0810, Japan.}
\email{palezzato@math.sci.hokudai.ac.jp}
\author{Michele Torielli}
\address{Michele Torielli, Department of Mathematics, Hokkaido University, Kita 10, Nishi 8, Kita-Ku, Sapporo 060-0810, Japan.}
\email{torielli@math.sci.hokudai.ac.jp}


\date{\today}
\maketitle

\tableofcontents

\section{Introduction}
Let $V$ be a vector space of dimension $l$ over a field $K$. Fix a system of coordinate $(x_1,\dots, x_l)$ of $V^\ast$. 
We denote by $S = S(V^\ast) = K[x_1,\dots, x_l]$ the symmetric algebra. 
A hyperplane arrangement $\A = \{H_1, \dots, H_n\}$ is a finite collection of hyperplanes in $V$.

Freeness of an arrangement is a key notion which connects arrangement theory with algebraic geometry and combinatorics. 
The study of free arrangements was started by Saito \cite{saito} and a remarkable factorization theorem was proved by Terao \cite{terao1981generalized}. 
This theorem asserts that the characteristic polynomial of a free arrangement completely factors into linear polynomials over the integers. 
This imposes a necessary condition on the structure of the intersection lattice for an arrangement to be free. The Terao conjecture is the converse problem,
i.e. to understand if the structure of the intersection lattice characterize freeness of arrangements. A lot of work has done to solve this conjecture especially in the case
of characteristic $0$ (see for example \cite{yoshinaga2004characterization}, \cite{yoshinaga2005freeness}, \cite{abe2013free}, \cite{abe2016divisionally-im} and \cite{Gin-freearr}). 
However, also the case of finite characteristic has been analyzed (see for example \cite{terao1983free} and \cite{yoshinaga2007free}).
Despite this effort, this question is still open.

The purpose of this paper is to study the connections between freeness over a field of characteristic zero and over a finite field, and to describe in which cases
 the two situations are related and how. 

\section{Preliminares on hyperplane arrangements}\label{sec:arr}

In this section, we recall the terminology, the basic notations and some fundamental results related to hyperplane arrangements.

Let $K$ be a field. A finite set of affine hyperplanes $\A =\{H_1, \dots, H_n\}$ in $K^l$ 
is called a \textbf{hyperplane arrangement}. For each hyperplane $H_i$ we fix a defining equation $\alpha_i\in S= K[x_1,\dots, x_l]$ such that $H_i = \alpha_i^{-1}(0)$, 
and let $Q(\A)=\prod_{i=1}^n\alpha_i$. An arrangement $\A$ is called \textbf{central} if each $H_i$ contains the origin of $K^l$. 
In this case, the defining equation $\alpha_i\in S$ is linear homogeneous, and hence $Q(\A)$ is homogeneous of degree $n$. 

Let $L(\A)=\{\bigcap_{H\in\mathcal{B}}H \mid \mathcal{B}\subseteq\A\}$ be the \textbf{lattice of intersection} of $\A$, 
ordered by reverse inclusion, i.e. $X\le Y$ if and only if $Y\subseteq X$, for $X,Y\in L(\A)$.
Define a rank function on $L(\A)$ by $\rk(X)=\codim(X)$. 
$L(\A)$ plays a fundamental role in the study of hyperplane arrangements, in fact it determines the combinatorics of the arrangement.

Let $\mu\colon L(\A)\to\mathbb{Z}$ be the \textbf{M\"obius function} of $L(\A)$ defined by
$$\mu(X)=
\begin{cases}
      1 & \text{for } X=K^l,\\
      -\sum_{Y<X}\mu(Y) & \text{if } X>K^l.
\end{cases}$$
\begin{Definition}The \textbf{characteristic polynomial} of $\A$ is defined by $$\chi(\A,t) = \sum_{X\in L(\A)}\mu(X)t^{\dim(X)}.$$
\end{Definition}
%

The importance of the characteristic polynomial in combinatorics is justified by the following result 
from \cite{crapo1970foundations}, \cite{orlik1980combinatorics} and \cite{zaslavsky1975facing}.

\begin{Theorem} 
We have that
\begin{enumerate}
\item If $\A$ is an arrangement in $\mathbb{F}_p^l$, then $|\mathbb{F}^l_p\setminus \bigcup_{H\in\A}H|=\chi(\A, p)$.
\item If $\A$ is an arrangement in $\C^l$, then the topological $i$-th Betti number of the complement is $b_i(\C^l  \setminus \bigcup_{H\in\A}H)=b_i(\A)$.
\item If $\A$ is an arrangement in $\R^l$, then $|\chi(\A,-1)|$ is the number of chambers and $|\chi(\A, 1)|$ is the number of bounded chambers.
\end{enumerate}
\end{Theorem}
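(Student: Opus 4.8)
The three parts are of quite different natures, and I would prove them by three different methods: parts (1) and (3) are in the end finite combinatorics driven by the M\"obius function of $L(\A)$, while part (2) genuinely needs input from algebraic topology.

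For (1) I would argue by M\"obius inversion over $L(\A)$, not by induction. For $x\in\mathbb{F}_p^l$ let $X(x)=\bigcap_{x\in H\in\A}H\in L(\A)$ be the smallest flat containing $x$; note that $X(x)=\mathbb{F}_p^l$ precisely when $x$ lies on no hyperplane. For any flat $X\in L(\A)$ and any $x\in X$ one has $X(x)\subseteq X$ as subsets of $\mathbb{F}_p^l$, i.e.\ $X(x)\ge X$ in $L(\A)$; hence the $p^{\dim X}$ points of $X$ partition according to their minimal flat,
$$p^{\dim X}=|X|=\sum_{Y\ge X}g(Y),\qquad g(Y):=\#\{x\in\mathbb{F}_p^l : X(x)=Y\}.$$
M\"obius inversion on the (finite) poset $L(\A)$, whose least element is $\mathbb{F}_p^l$, gives $g(X)=\sum_{Y\ge X}\mu(X,Y)\,p^{\dim Y}$; evaluating at $X=\mathbb{F}_p^l$, where $\mu(\mathbb{F}_p^l,Y)=\mu(Y)$ in the notation above, yields
$$\bigl|\mathbb{F}_p^l\setminus\textstyle\bigcup_{H\in\A}H\bigr|=g(\mathbb{F}_p^l)=\sum_{Y\in L(\A)}\mu(Y)\,p^{\dim Y}=\chi(\A,p).$$
The argument does not use centrality.

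For (3) I would use the deletion--restriction recursion. Fix $H_0\in\A$, put $\A'=\A\setminus\{H_0\}$ and let $\A''=\A^{H_0}$ be the restriction of $\A$ to $H_0\cong\R^{l-1}$. Comparing the M\"obius functions of $L(\A)$, $L(\A')$ and $L(\A'')$ gives $\chi(\A,t)=\chi(\A',t)-\chi(\A'',t)$, with $\chi(\emptyset,t)=t^l$ for the empty arrangement. Geometrically, re-inserting $H_0$ subdivides it into exactly $r(\A'')$ top-dimensional faces, each of which cuts one chamber of $\A'$ into two, so the chamber counts satisfy $r(\A)=r(\A')+r(\A'')$; a similar but more delicate count of which chambers remain bounded gives $b(\A)=b(\A')+b(\A'')$. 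Since $\deg\chi(\A)=l$ while $\deg\chi(\A'')=l-1$, multiplying the polynomial recursion by $(-1)^l$ and evaluating at $t=-1$ and $t=1$ turns the minus sign into a plus, so $(-1)^l\chi(\A,-1)$ and $(-1)^l\chi(\A,1)$ obey exactly the same recursions as $r(\A)$ and $b(\A)$; an induction on $|\A|$ with the appropriate low-complexity base cases then gives $|\chi(\A,-1)|=r(\A)$ and $|\chi(\A,1)|=b(\A)$, the bounded case being where one must be careful about degenerate (non-essential) arrangements.

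Part (2) is the real content and the main obstacle: it is the Orlik--Solomon theorem, and there is no avoiding topology. Write $M(\A)=\C^l\setminus\bigcup_{H\in\A}H$. One natural route is again deletion--restriction: $M(\A)=M(\A')\setminus M(\A'')$ with $M(\A'')$ a complex-codimension-one submanifold of $M(\A')$, so the Thom--Gysin sequence produces a long exact sequence linking $H^\ast(M(\A))$, $H^\ast(M(\A'))$ and $H^\ast(M(\A''))$; the crux is to show it splits into short exact pieces, which forces $\pi(\A,t)=\pi(\A',t)+t\,\pi(\A'',t)$ for the Poincar\'e polynomials, exactly the recursion obeyed by $(-t)^l\chi(\A,-t^{-1})=\sum_i b_i(\A)t^i$, whence $b_i(M(\A))=b_i(\A)$ by induction on $|\A|$. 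Making the connecting homomorphisms vanish is the delicate point, and the cleanest way is to exhibit explicit closed logarithmic $1$-forms $d\alpha_i/\alpha_i$ generating $H^\ast(M(\A))$ (the Brieskorn/Orlik--Solomon presentation of the cohomology ring). Alternatively, one can bypass the ring structure and invoke the Goresky--MacPherson formula together with Whitney's theorem that the order complex of each open interval $(\hat 0,X)$ in $L(\A)$ is homotopy equivalent to a wedge of $|\mu(X)|$ spheres of dimension $\rk(X)-2$, which yields $b_i(M(\A))=\sum_{\rk X=i}|\mu(X)|$ directly. In summary, (1) and (3) are finite bookkeeping once the M\"obius data are in hand, whereas (2) is where the serious work lives.
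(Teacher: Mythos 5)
The paper does not prove this theorem at all: it is stated as background and attributed to Crapo--Rota, Orlik--Solomon, and Zaslavsky, so there is no internal proof to compare against. Your reconstruction follows the classical arguments from exactly those sources, and the outlines are correct. Part (1) via M\"obius inversion on $L(\A)$ is complete as written (the partition $|X|=\sum_{Y\ge X}g(Y)$ and the identification $\mu(\mathbb{F}_p^l,Y)=\mu(Y)$ both check out, and you are right that centrality is not needed). Parts (2) and (3) are honest sketches rather than proofs: for (3) the chamber recursion $r(\A)=r(\A')+r(\A'')$ is easy, but the bounded-chamber recursion $b(\A)=b(\A')+b(\A'')$ and the treatment of non-essential arrangements (where one must count relatively bounded faces, or note that $\chi(\A,1)=0$) is precisely the delicate step you flag without carrying out; for (2) the entire difficulty of Orlik--Solomon is concentrated in the splitting of the Thom--Gysin sequence (Brieskorn's lemma) or, on the alternative route, in the Goresky--MacPherson formula plus Folkman's theorem on the homotopy type of the intervals of a geometric lattice, neither of which you prove. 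As a map of where the content lies and which tools suffice, your proposal is accurate and would be the standard way to establish the theorem; as a self-contained proof it leaves the two genuinely hard topological/geometric inputs as black boxes, which is no worse than what the paper itself does by citation.
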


\section{Free hyperplane arrangements}
We recall the basic notions and properties of free hyperplane arrangements.

We denote by $\Der_{K^l} =\{\sum_{i=1}^l f_i\partial_{x_i}~|~f_i\in S\}$ the $S$-module of \textbf{polynomial vector fields} on $K^l$ (or $S$-derivations). 
Let $\delta =  \sum_{i=1}^l f_i\partial_{x_i}\in \Der_{K^l}$. Then $\delta$ is  said to be \textbf{homogeneous of polynomial degree} $d$ if $f_1, \dots, f_l$ are homogeneous polynomials of degree~$d$ in $S$. 
In this case, we write $\pdeg(\delta) = d$.

\begin{Definition} 
Let $\A$ be a central arrangement in $K^l$. Define the \textbf{module of vector fields logarithmic tangent} to $\A$ (or logarithmic vector fields) by
$$D(\A) = \{\delta\in \Der_{K^l}~|~ \delta(\alpha_i) \in (\alpha_i) S, \forall i\}.$$
\end{Definition}

The module $D(\A)$ is obviously a graded $S$-module and we have that $D(\A)= \{\delta\in \Der_{K^l}~|~ \delta(Q(\A)) \in (Q(\A)) S\}$. 
In particular, since the arrangement $\A$ is central, then the Euler vector field $\delta_E=\sum_{i=1}^lx_i\partial_{x_i}$ belongs to $D(\A)$, in fact
$\delta_E(Q(\A))=nQ(\A)$. 
In this case, if the characteristic of $K$ does not divide $n$, we can write $D(\A)\cong S{\cdot}\delta_E\oplus D_0(\A)$, where $$D_0(\A)=\{\delta\in \Der_{K^l}~|~ \delta(Q(\A))=0\}.$$

\begin{Definition} 
A central arrangement $\A$ in $K^l$ is said to be \textbf{free with exponents $(e_1,\dots,e_l)$} 
if and only if $D(\A)$ is a free $S$-module and there exists a basis $\delta_1,\dots,\delta_l \in D(\A)$ 
such that $\pdeg(\delta_i) = e_i$, or equivalently $D(\A)\cong\bigoplus_{i=1}^lS(-e_i)$.
\end{Definition}

In general the exponents of an arrangement depend on the characteristic of $K$. In fact, we have the following examples.
\begin{Example}[\cite{orlterao}, Example 4.35]\label{ex:specialexample} Consider the arrangement $\A$ in $K^3$ with defining equation
$Q(\A)=xyz(x-y)(x+z)(y+z)(x+y+z)$. Then $\A$ is free for any $K$, but its exponents depend on the characteristic of $K$.

If $\charact(K)\ne2$, then $\A$ is free with exponents $(1,3,3)$, in fact we can take as basis of $D(\A)$ the following vector fields $\delta_E,$ 
$\delta_2=x(x+z)(x+y+z)\partial_x+y(y+z)(x+y+z)\partial_y$ and $\delta_3=x(x+z)(2y+z)\partial_x+y(y+z)(2x+z)\partial_y$.

If $\charact(K)=2$, then $\A$ is free with exponents $(1,2,4)$, in fact we can take as basis of $D(\A)$ the following vector fields $\delta_E,$ 
$\delta_2=x^2\partial_x+y^2\partial_y+z^2\partial_z$ and $\delta_3=x^4\partial_x+y^4\partial_y+z^4\partial_z$.
\end{Example}

\begin{Example}\label{ex:specialexample2fields} Consider $\A$ the arrangement in $K^3$ as the cone of $\A^{[-2,2]}$ the Shi-Catalan arrangement of type B , see for example \cite{abe2011freeness}.
Then $\A$ is free for any $K$, but its exponents depend on the characteristic of $K$.

If $\charact(K)=5$, then $\A$ is free with exponents $[1,5,15]$. If $\charact(K)=7$, then $\A$ is free with exponents $[1,7,13]$.
If $\charact(K)\ne5,7$, then $\A$ is free with exponents $[1,9,11]$.
\end{Example}

\begin{Remark}\label{rem:reducttoD0}
Assume that the characteristic of $K$ does not divide $n$, then a central arrangement $\A$ is free if and only if $D_0(\A)$ is a free $S$-module.
\end{Remark}

In general, it is important to distinguish between the case that the characteristic of $K$ does not divide $n$ and the case that it does.
In the latter, there might not exist $\delta\in D(\A)$ such that $\delta(Q(\A))$ is a non-zero scalar multiple of $Q(\A)$.
\begin{Example} Consider the arrangement $\A$ in $\mathbb{F}_3^3$ with defining equation $Q(\A)=xyz(y+z)(x-y)(x+z)$. In this case $n=6$. Then
$\A$ is free and we can take $\{\delta_E,\delta_2,\delta_3\}$ as basis of $D(\A)$, where $\delta_2=(xy-y^2)\partial_y+(xz+z^2)\partial_z$ and 
$\delta_3=(xyz +xz^2 +yz^2 +z^3)\partial_z$. In this situation, $\delta_E(Q(\A))=\delta_2(Q(\A))=0$ and $\delta_3(Q(\A))=xQ(\A)$.
Hence, there does not exist $\delta\in D(\A)$ such that $\delta(Q(\A))$ is a non-zero scalar multiple of $Q(\A)$.
\end{Example}


Let $\delta_1,\dots,\delta_l \in D(\A)$. 
Then $\det(\delta_i(x_j))_{i,j}$ is divisible by $Q(\A)$.
One of the most famous characterization of freeness is due to Saito \cite{saito} and it uses  the determinant of the coefficient 
matrix of $\delta_1,\dots,\delta_l$ to check if the arrangement $\A$ is free or not. Notice that the original statement is for
characteristic $0$, but in \cite{terao1983free} Terao showed that this statement holds true for any characteristic.

\begin{Theorem}[Saito's criterion]\label{theo:saitocrit}
Let $\A$ be a central arrangement in $K^l$ and $\delta_1, \dots, \delta_l \in D(\A)$. Then the following facts are equivalent
\begin{enumerate}
\item $D(\A)$ is free with basis $\delta_1, \dots, \delta_l$, i. e. $D(\A) = S\cdot\delta_1\oplus \cdots \oplus S \cdot\delta_l$.
\item $\det(\delta_i(x_j))_{i,j}=c Q(\A)$, where $c\in K\setminus\{0\}$.
\item $\delta_1, \dots, \delta_l$ are linearly independent over $S$ and $\sum_{i=1}^l\pdeg(\delta_i)=n$.
\end{enumerate}
\end{Theorem}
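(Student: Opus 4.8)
The plan is to establish the cycle of implications $(1)\Rightarrow(3)\Rightarrow(2)\Rightarrow(1)$, using the preliminary fact (stated just before the theorem) that for any $\delta_1,\dots,\delta_l\in D(\A)$ the determinant $\det(\delta_i(x_j))_{i,j}$ is divisible by $Q(\A)$ in $S$. Throughout, write $M=(\delta_i(x_j))_{i,j}$ for the coefficient matrix, so $\det M\in S$ is homogeneous of degree $\sum_i\pdeg(\delta_i)$ whenever the $\delta_i$ are homogeneous (which we may assume, since $D(\A)$ is a graded module and a basis can be taken homogeneous).

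For $(1)\Rightarrow(3)$: if $\delta_1,\dots,\delta_l$ form an $S$-basis of $D(\A)$, then in particular they are $S$-linearly independent. Since $\delta_E=\sum_i x_i\partial_{x_i}\in D(\A)$, there is an expression $\delta_E=\sum_i f_i\delta_i$; comparing coefficient matrices gives $(x_1,\dots,x_l)=(f_1,\dots,f_l)M$, hence $\det M$ divides a power of each $x_j$ up to the $f_i$'s — more cleanly, $\det M$ divides $\det$ of the diagonal matrix arising from the Euler field's relation, forcing $\deg\det M\le l$... but the efficient route is: $D(\A)$ sits between $\bigoplus S\,\partial_{x_i}$ and its localizations, and one shows $\det M$ is, up to a unit, exactly $Q(\A)$ by a localization/codimension argument at each hyperplane, giving $\sum_i\pdeg(\delta_i)=\deg Q(\A)=n$. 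For $(3)\Rightarrow(2)$: assuming linear independence and $\sum_i\pdeg(\delta_i)=n$, the determinant $\det M$ is a nonzero homogeneous polynomial of degree $n$; since $Q(\A)\mid\det M$ and $\deg Q(\A)=n$, the quotient is a homogeneous polynomial of degree $0$, i.e. a constant $c$, and $c\ne0$ because $\det M\ne0$ (linear independence of the $\delta_i$ over $S$ is equivalent to $\det M\ne0$). For $(2)\Rightarrow(1)$: given $\det M=cQ(\A)$ with $c\ne0$, one must show every $\theta\in D(\A)$ lies in $\sum_i S\delta_i$. Write $\theta=\sum_i g_i\delta_i$ with $g_i$ in the fraction field $\mathrm{Frac}(S)$ (possible since $\det M\ne0$); by Cramer's rule $g_i=\det M_i/\det M$ where $M_i$ is $M$ with its $i$-th row replaced by the coefficient row of $\theta$. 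The claim is that $\det M=cQ(\A)$ divides $\det M_i$ in $S$; granting this, $g_i\in S$ and we are done. To see the divisibility, fix a hyperplane $H_j$ with defining form $\alpha_j$: since $\theta,\delta_1,\dots,\delta_l\in D(\A)$, each of $\theta(\alpha_j)$ and $\delta_i(\alpha_j)$ is divisible by $\alpha_j$, and a row/column manipulation of $M_i$ (change coordinates so $\alpha_j=x_1$) shows $\alpha_j\mid\det M_i$; as the $\alpha_j$ are pairwise non-associate irreducibles and $S$ is a UFD, $Q(\A)=\prod_j\alpha_j\mid\det M_i$.

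The main obstacle is the last implication $(2)\Rightarrow(1)$, specifically the divisibility $Q(\A)\mid\det M_i$ in $S$: the naive coordinate change making $\alpha_j=x_1$ shows each $\alpha_j$ divides $\det M_i$ individually, but one must be careful that this is genuinely a statement in the polynomial ring (not just after localization) and that multiplicity-one of each factor in $Q(\A)$ is exactly what lets the UFD argument assemble the individual divisibilities into $Q(\A)\mid\det M_i$. The other point requiring care — and the only place characteristic enters — is the reduction to homogeneous $\delta_i$ and the identification in $(1)\Rightarrow(3)$ of $\deg\det M$ with $n$; this uses only that $S$ is a graded UFD over the field $K$ and the divisibility $Q(\A)\mid\det M$, both of which are characteristic-free, which is why Terao's observation that Saito's criterion holds over any field goes through verbatim.
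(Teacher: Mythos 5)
The paper does not actually prove this theorem: it is quoted from Saito's paper and from Terao's observation that the argument is characteristic-free, so the comparison can only be with the classical proof, which your plan essentially reproduces. Your $(3)\Rightarrow(2)$ (linear independence over the domain $S$ is equivalent to $\det M\neq 0$, then compare degrees against the divisibility $Q(\A)\mid\det M$) and your $(2)\Rightarrow(1)$ (Cramer's rule, plus the fact that $Q(\A)$ divides the determinant of the coefficient matrix of \emph{any} $l$ elements of $D(\A)$, assembled factor-by-factor in the UFD $S$ using that $Q(\A)$ is reduced) are correct and standard; you should only add that $\det M\neq0$ also gives $S$-linear independence, so that one really gets $D(\A)=\bigoplus_i S\cdot\delta_i$ and not merely generation.

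The genuine gap is in $(1)\Rightarrow(3)$. Your Euler-field attempt is a dead end (the relation $(x_1,\dots,x_l)=(f_1,\dots,f_l)M$ yields no bound on $\deg\det M$), and you abandon it mid-sentence in favour of "a localization/codimension argument at each hyperplane" which is only named, never carried out --- yet this is exactly where the substance of the theorem lives. The elementary identities available (e.g.\ $Q(\A)\partial_{x_j}\in D(\A)$ gives $Q(\A)\cdot I=NM$, hence $\det M\mid Q(\A)^l$) only show that the cofactor $f$ in $\det M=fQ(\A)$ divides $Q(\A)^{l-1}$; they do not force $f$ to be a constant. To close the argument you must do the local computation: for a height-one prime $(\pi)$ of $S$, the localization $D(\A)_{(\pi)}$ equals the localization of $\Der_{K^l}$ when $\pi$ is associate to no $\alpha_i$ (a free module whose basis has unit determinant), and equals the logarithmic module of the single hyperplane $\alpha_i=0$ when $\pi$ is associate to $\alpha_i$ (free with basis $\alpha_i\partial_{x_1},\partial_{x_2},\dots,\partial_{x_l}$ in adapted coordinates, determinant $\alpha_i$ up to a unit). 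Since a basis of $D(\A)$ stays a basis of every localization and transition matrices between bases over $S_{(\pi)}$ are invertible, the order of vanishing of $\det M$ is $1$ along each $H_i$ and $0$ along every other divisor, whence $\det M=cQ(\A)$ with $c\in K\setminus\{0\}$ and in particular $\sum_i\pdeg(\delta_i)=n$. (This in fact proves $(1)\Rightarrow(2)$; your cycle still closes because $(2)\Rightarrow(3)$ is immediate.) This local argument is also characteristic-free, which is the real content of Terao's remark.
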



Given an arrangement $\A$ in $K^l$, the \textbf{Jacobian ideal} of $\A$
is the ideal of $S$ generated by $Q(\A)$ and all its partial derivatives, and it is denoted by $J(\A)$.
This ideal has a central role in the study of free arrangements.
In fact, we can also characterize freeness by looking at the Jacobian ideal of $\A$.
This characterization by Terao \cite{terao1980arrangementsI} will be used in Section \ref{sec:fromPtozerochar}.
Notice that Terao described this result for characteristic $0$, but the same proof also works for 
the case that the characteristic does not divide the cardinality of $\A$.

\begin{Theorem}[Terao's criterion]\label{theo:freCMcod2} 
A central arrangement $\A$ in $K^l$ is free if and only if $S/J(\A)$ is $0$ or $(l-2)$-dimensional Cohen-Macaulay.
\end{Theorem}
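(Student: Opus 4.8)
The plan is to reproduce Terao's original argument, whose mechanism is to realize $D_0(\A)$ as a second syzygy module of $S/J(\A)$ and then convert freeness into a statement about projective dimension via the Auslander--Buchsbaum formula. As noted just before the statement, it suffices to treat the case in which $\charact(K)$ does not divide $n=|\A|$. In that case Euler's identity $nQ(\A)=\sum_i x_i\partial_{x_i}Q(\A)$ shows that $J(\A)$ is already generated by $\partial_{x_1}Q(\A),\dots,\partial_{x_l}Q(\A)$, and by Remark~\ref{rem:reducttoD0} it is enough to decide when $D_0(\A)$ is a free $S$-module.

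The first step is the fundamental exact sequence. The $S$-linear map $\epsilon\colon\Der_{K^l}\cong S^l\to S$ sending $\sum_i f_i\partial_{x_i}$ to $\sum_i f_i\partial_{x_i}Q(\A)=\delta(Q(\A))$ has kernel exactly $D_0(\A)$ and image exactly $J(\A)$; splicing with $0\to J(\A)\to S\to S/J(\A)\to 0$ produces
$$0\to D_0(\A)\to S^l\xrightarrow{\epsilon}S\to S/J(\A)\to 0.$$
Thus $D_0(\A)$ is, up to a free direct summand, the second syzygy module of $S/J(\A)$, and comparing this with a minimal free resolution (Schanuel's lemma) gives $\operatorname{pd}_S D_0(\A)=\max\{0,\ \operatorname{pd}_S(S/J(\A))-2\}$; in particular $D_0(\A)$ is free if and only if $\operatorname{pd}_S(S/J(\A))\le 2$.

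The second step is to pin down $\dim S/J(\A)$. Here one uses, in a characteristic-free way, that $V(J(\A))\subseteq\bigcup_{i\ne j}(H_i\cap H_j)$: along the smooth part of a single hyperplane $H_i$, writing $Q(\A)=\alpha_i\prod_{j\ne i}\alpha_j$, some partial of $Q(\A)$ restricts on $H_i$ to a nonzero scalar times $\prod_{j\ne i}\alpha_j|_{H_i}$, which is nonzero because the $\alpha_j$ define distinct hyperplanes. Hence $\codim V(J(\A))\ge 2$, so $\dim S/J(\A)\le l-2$. If $\rank(\A)\le 1$, then $n\le 1$, the hypersurface $V(Q(\A))$ is smooth, $J(\A)=S$ and $S/J(\A)=0$; if $\rank(\A)\ge 2$, then every $H_i\cap H_j$ with $H_i\ne H_j$ lies in $V(J(\A))$, forcing $\dim S/J(\A)=l-2$.

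Finally one assembles the equivalence. If $\rank(\A)\le 1$ both conditions hold trivially, so assume $\rank(\A)\ge 2$, whence $S/J(\A)\ne 0$ has dimension $l-2$. Since depth never exceeds dimension, $\depth S/J(\A)\le l-2$, i.e.\ $\operatorname{pd}_S(S/J(\A))\ge 2$ by Auslander--Buchsbaum. Combining with the syzygy formula above: $D_0(\A)$ is free $\iff\operatorname{pd}_S(S/J(\A))=2\iff\depth S/J(\A)=l-2\iff S/J(\A)$ is Cohen--Macaulay of dimension $l-2$. Together with Remark~\ref{rem:reducttoD0} this is the theorem. The step requiring the most care is the codimension statement above: verifying over an arbitrary field both that no hyperplane $H_i$ is contained in $V(J(\A))$ (so $\dim S/J(\A)\le l-2$) and that $V(J(\A))$ does contain the rank-$2$ flats, since this is exactly what forces $S/J(\A)$ to have the expected dimension $l-2$ and makes the Auslander--Buchsbaum bookkeeping rigid. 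Everything else is the exact-sequence formalism and standard homological algebra over the polynomial ring $S$.
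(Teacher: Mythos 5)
Your proof is correct and follows essentially the same route as the paper: the four-term exact sequence realizing $D_0(\A)$ as a second syzygy of $S/J(\A)$, followed by the Auslander--Buchsbaum bookkeeping with the bound $\dim S/J(\A)\le l-2$. The only difference is that you supply the codimension-two argument for $V(J(\A))$ explicitly (and note that Euler's identity puts $Q(\A)$ in the ideal of partials), details the paper's proof takes for granted.
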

\begin{proof}
By Remark \ref{rem:reducttoD0}, we need to prove that $D_0(\A)$ is a free $S$-module if and only if $S/J(\A)$ is $0$ or Cohen-Macaulay. 

We have an exact sequence $$\xymatrix{0\ar[r]&D_0(\A)\ar[r]^{~~~\alpha} &S^l \ar[r]^\beta &S \ar[r]^{\gamma~~~~} &S/J(\A)\ar[r] &0},$$ where $$\alpha(\sum_{i=1}^l f_i \partial / \partial x_i)=(f_1, \dots, f_l)^t ~\text{ for }~ \sum_{i=1}^l f_i \partial / \partial x_i \in D_0(\A),$$ $$\beta((g_1, \dots, g_l)^t)= \sum_{i=1}^l g_i \partial Q(\A)/ \partial x_i ~\text{ for }~ (g_1, \dots, g_l)^t\in S^l$$ and $\gamma$ is the natural projection.
Thus $D_0(\A)$ is free if and only if the homological dimension of $S / J(\A)$ is less than three.

Recall the Auslander-Buchsbaum equality $$\depth(S / J(\A))+\hdim(S / J(\A))=\dim(S)=l,$$ if $S / J(\A)\ne 0$. On the other hand, we have $\dim(S / J(\A))\le l-2$, then $$ \hdim(S / J(\A))\le 2$$ $$\Leftrightarrow \depth(S / J(\A))\ge l-2 \ge \dim(S / J(\A))$$ $$\Leftrightarrow \depth(S / J(\A))= l-2 = \dim(S / J(\A))$$ $$\Leftrightarrow S / J(\A) \text{ is Cohen-Macaulay of dimension } l-2,$$ if $S / J(\A)\ne 0$. When $S / J(\A)=0$, $D_0(\A)$ is obviously a free $S$-module. 
\end{proof}


%

Freeness has several consequences. For example we recall the following.

\begin{Theorem}\emph{(\cite{terao1981generalized})} 
Suppose that $\A$ is a free arrangement in $K^l$ with exponents $(e_1, \dots , e_l)$. 
Then $$\chi(\A,t)=\prod_{i=1}^l(t-e_i).$$
\end{Theorem}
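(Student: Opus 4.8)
The plan is to derive the factorization $\chi(\A,t)=\prod_{i=1}^l(t-e_i)$ from Terao's criterion (Theorem~\ref{theo:freCMcod2}) together with the combinatorial meaning of the characteristic polynomial. The strategy has two ingredients. First, I would identify $\chi(\A,t)$ with an invariant of the graded $S$-module $D(\A)$: since $\A$ is free, the exact sequence
$$\xymatrix{0\ar[r]&D_0(\A)\ar[r] &S^l \ar[r] &S \ar[r] &S/J(\A)\ar[r] &0}$$
from the proof of Theorem~\ref{theo:freCMcod2}, combined with $D(\A)\cong S\cdot\delta_E\oplus D_0(\A)\cong\bigoplus_{i=1}^l S(-e_i)$, determines the Hilbert series of $S/J(\A)$, hence its Hilbert polynomial. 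The key identity to establish is that the Poincar\'e/Hilbert series of $D(\A)$ encodes the characteristic polynomial via
$$\Poin(D(\A),t)=\frac{1}{(1-t)^l}\sum_{i=1}^l t^{e_i},$$
and that this same series, read off the lattice side, equals $\chi(\A,t)$ up to the standard change of variables. Concretely I would invoke the classical fact (going back to Solomon--Terao in characteristic zero, and valid over any field when the relevant sequence is exact, i.e. when $\A$ is free) that
$$\chi(\A,t)=(-1)^l\,\Poin\bigl(D(\A),-t^{-1}\bigr)\,t^l\big/(\text{normalization}),$$
so that freeness forces $\chi(\A,t)=\prod_{i=1}^l(t-e_i)$.

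More carefully, the cleanest route avoids the Solomon--Terao machinery and argues by induction on $n=|\A|$ using addition--deletion, but that theorem is not stated in the excerpt, so I would instead proceed as follows. Pick a basis $\delta_1,\dots,\delta_l$ of $D(\A)$ with $\pdeg(\delta_i)=e_i$ given by the definition of freeness. By Saito's criterion (Theorem~\ref{theo:saitocrit}), $\det(\delta_i(x_j))_{i,j}=cQ(\A)$ with $c\ne0$, and taking degrees gives $\sum_{i=1}^l e_i=n=\deg\chi(\A,t)$, which already matches the degree of $\prod(t-e_i)$ and pins down the leading coefficient. The substance is then to match all the lower coefficients; for this I would compute the Hilbert series $\Poin(D(\A),t)=\sum_i t^{e_i}/(1-t)^l$ on one side, and on the other side express it intrinsically through the exact sequence above as a function of $\Poin(S/J(\A),t)$, then show the latter is governed by $\chi(\A,t)$. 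The bridge is Terao's theorem that $\Poin(S/J(\A),t)(1-t)^l$ specializes, via $t\mapsto$ the appropriate variable, to $\chi(\A,t)$ when $S/J(\A)$ is Cohen-Macaulay of dimension $l-2$—precisely the hypothesis guaranteed by Theorem~\ref{theo:freCMcod2}.

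\textbf{The main obstacle.} The hard part is making the identification between the algebraic invariant $\Poin(D(\A),t)$ and the combinatorial invariant $\chi(\A,t)$ work \emph{over an arbitrary field}, since the usual proofs (Solomon--Terao, Orlik--Terao's book) are written in characteristic zero and quietly use that the characteristic does not divide $n$ so that the Euler splitting $D(\A)=S\delta_E\oplus D_0(\A)$ holds. When $\charact(K)\mid n$ this splitting can fail (as the excerpt's $\mathbb{F}_3$ example shows), so I would need to either restrict to the case $\charact(K)\nmid n$ and handle the divisible case separately, or work directly with $D(\A)$ and the four-term sequence without splitting off $\delta_E$. Given that the factorization theorem is only being \emph{recalled} here (it is attributed to \cite{terao1981generalized}), the expectation is that the paper's proof is short and simply cites or lightly adapts the standard argument; the genuine content—and the only place where characteristic could bite—is the exactness of the sequence relating $D(\A)$ to $S/J(\A)$, which is exactly Theorem~\ref{theo:freCMcod2} and is already established in the excerpt for any characteristic not dividing $n$.
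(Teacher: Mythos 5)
First, a point of comparison: the paper does not prove this statement at all --- it is recalled with a citation to \cite{terao1981generalized} and used as a black box. So your proposal cannot be measured against an in-paper argument; it has to stand on its own, and as it stands it has a genuine gap.

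The gap is the ``bridge'' itself. Everything you can actually extract from the hypotheses --- the splitting $D(\A)\cong\bigoplus_i S(-e_i)$, the four-term exact sequence, Saito's criterion giving $\sum_i e_i=n$ --- lives entirely on the algebraic side: it computes the Hilbert series of $D(\A)$ and of $S/J(\A)$ in terms of the $e_i$, and it matches the top coefficient of $\chi(\A,t)$. But $\chi(\A,t)$ is defined through the M\"obius function of $L(\A)$, and no statement available in the paper (or in the standard literature, in the form you invoke it) says that $\Poin(D(\A),t)$ or $\Poin(S/J(\A),t)$ \emph{determines} $\chi(\A,t)$; asserting such a ``Terao's theorem that $\Poin(S/J(\A),t)(1-t)^l$ specializes to $\chi(\A,t)$'' is essentially restating the factorization theorem you are trying to prove. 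The genuine bridge is the Solomon--Terao formula, which reads
$$\chi(\A,t)=(-1)^l\lim_{x\to 1}\sum_{q=0}^{l}\Poin\bigl(D^q(\A),x\bigr)\bigl(t(x-1)-1\bigr)^q,$$
involving \emph{all} the modules $D^q(\A)$ of logarithmic $q$-derivations, not just $D^1(\A)=D(\A)$; your displayed formula with only $\Poin(D(\A),-t^{-1})$ is not that identity and is not correct as written. Moreover the Solomon--Terao identity is itself a substantial theorem (proved by a localization/M\"obius-inversion induction over $L(\A)$), so citing it ``as a classical fact'' does not yield a self-contained proof; in the free case one then still needs $D^q(\A)\cong\bigwedge^q D(\A)$ to telescope the sum into $\prod_i(t-e_i)$. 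Terao's original route is different again --- an induction comparing the Hilbert series of $D(\A)$ with the localizations $\A_X$, $X\in L(\A)$ --- and neither route is reconstructible from the degree count and Theorem~\ref{theo:freCMcod2} alone. Your diagnosis of the characteristic issue (the Euler splitting failing when $\charact(K)\mid n$) is accurate, but it is flagged rather than resolved. In short: the plan correctly identifies where the difficulty sits, but the one step that carries all the content --- passing from graded module data to the M\"obius-theoretic polynomial --- is exactly the step that is missing.
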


The previous theorem imposes a necessary condition on the structure of  $L(\A)$ for the arrangement $\A$ to be free. 
The Terao conjecture is the problem to ask the converse, i.e if the structure of $L(\A)$ characterize freeness of $\A$. 
This conjecture is still unsolved.  In \cite{yoshinaga2007free}, Yoshinaga gave an affirmative 
result when $K=\mathbb{F}_p$  and $l = 3$. Specifically, he proved the following result.
\begin{Theorem}\label{theo:yoshcharpfreechar} Let $\A$ be a central arrangement in $\mathbb{F}_p^3$. Then the following facts hold true.
\begin{enumerate}
\item When $|\A|\ge2p$, $\A$ is free if and only if $\chi(\A,p)=0$.
\item When $|\A|=2p-1$, $\A$ is free if and only if either $\chi(\A,p)=0$ or $\chi(\A,t)=(t-1)(t-p+1)^2$.
\item When $|\A|=2p-2$, $\A$ is free if and only if either $\chi(\A,p)=0$ or $\chi(\A,t)=(t-1)(t-p+1)(t-p+2)$.
\end{enumerate}
\end{Theorem}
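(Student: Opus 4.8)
The plan is to handle each of the three cases by establishing its two implications separately, using two features special to the prime field. First, by part~(1) of the Crapo--Orlik--Zaslavsky theorem recalled in Section~\ref{sec:arr}, $\chi(\A,p)=|\mathbb{F}_p^3\setminus\bigcup_{H\in\A}H|\ge 0$; writing $\chi(\A,t)=(t-1)(t^2-(n-1)t+b_2)$, which is possible since $t-1$ divides $\chi(\A,t)$, this amounts to $\chi(\A,p)=(p-1)N$ with $N\ge 0$ the number of points of $\mathbb{P}^2(\mathbb{F}_p)$ lying on no line of $\A$. Second, the \emph{Frobenius logarithmic derivation} $\theta_p:=\sum_{i=1}^3 x_i^p\,\partial_{x_i}$ lies in $D(\A)$ for every arrangement $\A$ over $\mathbb{F}_p$: if $\alpha=a_1x_1+a_2x_2+a_3x_3$ with $a_i\in\mathbb{F}_p$ then $\theta_p(\alpha)=\sum a_ix_i^p=(\sum a_ix_i)^p=\alpha^p\in(\alpha)S$ because $a_i^p=a_i$; moreover $\pdeg(\theta_p)=p$ and $\theta_p$ is not an $S$-multiple of $\delta_E$. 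One reduces at once to the case where $\A$ is essential, since a non-essential central arrangement in $\mathbb{F}_p^3$ has rank at most $2$, hence after a coordinate change is an arrangement of at most $p+1$ lines through the origin of a plane, and for such $\A$ all the equivalences can be checked directly.

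\emph{From freeness to the condition on $\chi$.} Suppose $\A$ is free. As it is essential and central, the Euler derivation forces its exponents to be $(1,e_2,e_3)$ with $1\le e_2\le e_3$ and $e_2+e_3=n-1$, and by the factorization theorem $\chi(\A,t)=(t-1)(t-e_2)(t-e_3)$, so $\chi(\A,p)=(p-1)(p-e_2)(p-e_3)$. Expanding $\theta_p$ in a homogeneous basis $\delta_E,\delta_2,\delta_3$ of $D(\A)$ and using that $\theta_p$ is not a multiple of $\delta_E$, a coefficient in front of $\delta_2$ or $\delta_3$ must be nonzero, whence $e_2\le p$. Also $\chi(\A,p)\ge 0$ gives $(p-e_2)(p-e_3)\ge 0$, so $e_2$ and $e_3$ are both $\le p$ or both $\ge p$; in the latter case $e_2=p$ and $\chi(\A,p)=0$. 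If both are $\le p$ then $e_2+e_3\le 2p$, and: for $n\ge 2p$ one has $n-1\in\{2p-1,2p\}$, forcing $(e_2,e_3)\in\{(p-1,p),(p,p)\}$ and $\chi(\A,p)=0$; for $n=2p-1$ one has $p-e_3=e_2-p+2$, and $(p-e_2)(e_2-p+2)\ge 0$ together with $e_2\le p-1$ forces $e_2\in\{p-2,p-1\}$, giving $\chi(\A,p)=0$ or $\chi(\A,t)=(t-1)(t-p+1)^2$; for $n=2p-2$ one has $p-e_3=e_2-p+3$, and $(p-e_2)(e_2-p+3)\ge 0$ with $e_2\le p-2$ forces $e_2\in\{p-3,p-2\}$, giving $\chi(\A,p)=0$ or $\chi(\A,t)=(t-1)(t-p+1)(t-p+2)$. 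This proves the ``only if'' part of all three statements.

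\emph{From the condition on $\chi$ to freeness.} This is the harder implication, and by Terao's criterion (Theorem~\ref{theo:freCMcod2}) it suffices to prove that $S/J(\A)$ is Cohen--Macaulay of dimension $l-2=1$. The guiding fact is the combinatorial rigidity of line arrangements over $\mathbb{F}_p$: because each line of $\A$ carries exactly $p+1$ points of $\mathbb{P}^2(\mathbb{F}_p)$ and any two lines meet in exactly one point, the quantities $\sum_{\rk(X)=2}\mu(X)$ (hence $b_2$) and $\sum_{\rk(X)=2}(m_X-1)^2$ (the total Tjurina number, with $m_X=|\{H\in\A:H\supseteq X\}|$) are determined by $n$ and $N$, that is by $n$ and $\chi(\A,p)$. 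Feeding the hypothesis into an addition--deletion / division argument for $l=3$ arrangements — available because every restriction $\A^H$ is a central arrangement of rank at most $2$ in $\mathbb{F}_p^2$, hence automatically free — one constructs, via Saito's criterion (Theorem~\ref{theo:saitocrit}), a basis $\delta_E,\theta_p',\delta_3$ of $D(\A)$ with $\pdeg(\theta_p')=p$ (here $\theta_p'$ is a suitable modification of $\theta_p$) and $\pdeg(\delta_3)=n-1-p$; the guiding extremal case is the arrangement of all lines of $\mathbb{P}^2(\mathbb{F}_p)$, free with exponents $(1,p,p^2)$ and Saito basis $\delta_E,\theta_p,\sum_i x_i^{p^2}\partial_{x_i}$, whose determinant is a nonzero scalar multiple of the product of all those linear forms. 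The exceptional types in (2) and (3), which correspond to $\A$ covering all of $\mathbb{P}^2(\mathbb{F}_p)$ except exactly one, respectively two, points, are handled by classifying the admissible combinatorial types and writing down explicit bases.

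The main obstacle is precisely this converse direction: Terao's criterion alone is not enough, and one genuinely needs the finite-field input. The delicate ingredients are the sharp bound $\pdeg\le p$ for a logarithmic derivation supplied by $\theta_p$ — which has no characteristic-zero analogue and is what makes the hypothesis $|\A|\ge 2p-2$ effective — the control it gives on the minimal degree of a Jacobian syzygy, and the verification that the few combinatorial types producing $\chi(\A,t)=(t-1)(t-p+1)^2$ for $n=2p-1$, respectively $(t-1)(t-p+1)(t-p+2)$ for $n=2p-2$, are exactly the free ones.
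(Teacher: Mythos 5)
The paper does not prove this theorem: it is quoted verbatim from Yoshinaga's work (\cite{yoshinaga2007free}), so your proposal has to stand on its own. Your forward implication (freeness $\Rightarrow$ the stated constraint on $\chi$) is essentially correct and complete: the Frobenius derivation $\theta_p=\sum x_i^p\partial_{x_i}$ does lie in $D(\A)$, an essential free central $3$-arrangement has exponents $(1,e_2,e_3)$ with $e_2+e_3=n-1$, expanding $\theta_p$ in a homogeneous basis forces $e_2\le p$, and combining this with $\chi(\A,p)=(p-1)(p-e_2)(p-e_3)\ge 0$ and the three hypotheses on $n$ pins down $(e_2,e_3)$ exactly as you say. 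This is the standard argument and I have no objection to it.

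The gap is the converse, which is the actual content of the theorem, and your text does not prove it. The three ingredients you offer do not combine into an argument. (i) The fact that every restriction $\A^H$ is free is automatic for any rank-$2$ arrangement over any field and gives nothing: addition--deletion requires the exponents of $\A$, $\A'$ and $\A^{H}$ to interlock, and producing a hyperplane for which they do is precisely what has to be proved, not an input. (ii) The ``combinatorial rigidity'' you invoke --- that $b_2=\sum_X(m_X-1)$ and $\sum_X(m_X-1)^2=n(n-1)-b_2$ are determined by $n$ and $N$ --- follows from the identity $\sum_X\binom{m_X}{2}=\binom{n}{2}$, which holds for line arrangements in $\mathbb{P}^2$ over \emph{every} field; since Terao's conjecture is open, no statement of this purely combinatorial kind can by itself yield freeness, so something genuinely finite-field-specific must enter the converse, and in your sketch it never does beyond the (correct but one-directional) degree bound from $\theta_p$. (iii) The phrase ``one constructs, via Saito's criterion, a basis $\delta_E,\theta_p',\delta_3$'' is exactly the assertion to be proved; no construction of $\theta_p'$ or $\delta_3$ is given, and the exceptional cases in (2) and (3) are deferred to an unexecuted ``classification of admissible combinatorial types.'' Yoshinaga's actual proof runs through his freeness criterion for $3$-arrangements in terms of the Ziegler restriction: for the multirestriction $(\A^{H_0},m)$ with exponents $(d_1,d_2)$, $d_1+d_2=n-1$, one always has $b_2^0(\A)\ge d_1d_2$ with equality if and only if $\A$ is free, the restriction of $\theta_p$ bounds the smaller multirestriction exponent by $p$, and the point count $\chi_0(\A,p)=(p-d_1)(p-d_2)+(b_2^0(\A)-d_1d_2)\ge0$ is then squeezed to force equality. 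None of this machinery (nor any substitute for it) appears in your proposal, so the ``if'' direction remains unproven.
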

More in general, in \cite{yoshinaga2007free}, Yoshinaga also proved the following.
\begin{Theorem}\label{theo:yoshcharpfreel} Let $\A$ be a central arrangement in $\mathbb{F}_p^l$. If $\chi(\A,p^{l-2})=0$, then
$\A$ is free with exponents $(1,p,\dots,p^{l-2},|\A|-1-p-\cdots-p^{l-2})$.
\end{Theorem}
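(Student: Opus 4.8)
The plan is to verify Saito's criterion (Theorem~\ref{theo:saitocrit}) for the exponent vector $(1,p,p^2,\dots,p^{l-2},d)$ with $d=|\A|-1-p-\cdots-p^{l-2}$. First I would exhibit $l-1$ of the required logarithmic derivations using the Frobenius: for $k\ge 0$ put $\theta_k=\sum_{i=1}^l x_i^{p^k}\partial_{x_i}$. If $\alpha=\sum a_i x_i$ is a defining form of some $H\in\A$ then $a_i\in\mathbb{F}_p$, so $a_i^{p^k}=a_i$ and $\theta_k(\alpha)=\sum a_i x_i^{p^k}=\bigl(\sum a_i x_i\bigr)^{p^k}=\alpha^{p^k}\in(\alpha)S$. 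Hence $\theta_0,\dots,\theta_{l-2}\in D(\A)$, with $\pdeg\theta_k=p^k$ and $\theta_0=\delta_E$. By Saito's criterion $(3)$ it then suffices to produce one further element $\eta\in D(\A)$ of polynomial degree $d$ that is $S$-linearly independent from $\theta_0,\dots,\theta_{l-2}$, since $d+1+p+\cdots+p^{l-2}=|\A|$.

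To build $\eta$ I would compare $\A$ with the ``full'' $\mathbb{F}_p$-arrangement $\mathcal{B}_l$, whose defining polynomial is $Q(\mathcal{B}_l)=\prod_\ell\ell$, the product over one representative $\ell$ of each point of $\mathbb{P}^{l-1}(\mathbb{F}_p)$. The coefficient matrix of $\theta_0,\dots,\theta_{l-1}$ is the Moore matrix $(x_j^{p^i})$, whose determinant is $\pm Q(\mathcal{B}_l)$ of degree $1+p+\cdots+p^{l-1}=|\mathcal{B}_l|$, so by Saito's criterion $\mathcal{B}_l$ is free with basis $\theta_0,\dots,\theta_{l-1}$; in particular $\theta_0,\dots,\theta_{l-2}$ are $S$-independent. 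Since $\A\subseteq\mathcal{B}_l$ we have $D(\mathcal{B}_l)\subseteq D(\A)$, and, writing $R=Q(\mathcal{B}_l)/Q(\A)=\prod_{\ell\notin\A}\ell$, also $R\cdot D(\A)\subseteq D(\mathcal{B}_l)$. Now consider the $S$-linear map $\phi\colon D(\A)\to S$ sending $\delta$ to $\det(\delta,\theta_0,\dots,\theta_{l-2})$ (the determinant of the $l\times l$ coefficient matrix); its image lies in the ideal $(Q(\A))$, say $\image\phi=Q(\A)\cdot\mathfrak a$ for a homogeneous ideal $\mathfrak a\subseteq S$. A degree count shows that the existence of a suitable $\eta$ of degree $d$ is equivalent to $1\in\mathfrak a$. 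Moreover $\phi(\theta_{l-1})=\pm Q(\mathcal{B}_l)=\pm R\,Q(\A)$, so $R\in\mathfrak a$ and the zero set of $\mathfrak a$ is contained in $V(R)=\bigcup_{\ell\notin\A}H_\ell$; and at the generic point of such an $H_\ell$ no hyperplane of $\A$ passes, so there $D(\A)$ is the full module of derivations and $\phi$ localizes to $\partial_{x_j}\mapsto\pm\Delta_j$ (Moore minors), at least one of which is a local unit because $\ell$ is a nonzero linear form. Hence $\mathfrak a$ is locally the unit ideal at every point of codimension one, i.e.\ its zero set has codimension $\ge 2$.

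The decisive step is to upgrade this to $\mathfrak a=S$, and this is exactly where the hypothesis $\chi(\A,p^{l-2})=0$ must be used; I expect this to be the main obstacle. Since the characteristic polynomial counts the points of the complement over any finite field containing the field of definition, $\chi(\A,p^{l-2})=0$ means $\bigcup_{H\in\A}H(\mathbb{F}_{p^{l-2}})=\mathbb{F}_{p^{l-2}}^{\,l}$, equivalently $Q(\A)\in I\bigl(\mathbb{P}^{l-1}(\mathbb{F}_{p^{l-2}})\bigr)$. I would try either to identify $S/\mathfrak a\cong\coker\phi$ with an explicit finite-colength module whose Hilbert function is governed by $\chi(\A,t)$, so that its vanishing at $t=p^{l-2}$ forces $\coker\phi=0$; or to construct the generator $\eta$ of degree $d$ directly from the membership $Q(\A)\in I(\mathbb{P}^{l-1}(\mathbb{F}_{p^{l-2}}))$ together with $\theta_{l-2}$ and $\theta_{l-1}$, whose coefficients already ``see'' the $\mathbb{F}_{p^{l-2}}$-points.

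A route I would keep in reserve is induction on $l$: the cases $l\le 2$ are trivial since every central arrangement of rank $\le 2$ is free, and $l=3$ can be treated by the analysis behind Theorem~\ref{theo:yoshcharpfreechar}; for $l\ge 4$ one would look for $H\in\A$ with $\chi(\A^H,p^{l-3})=0$ and $\chi(\A^H,t)\mid\chi(\A,t)$, use the inductive hypothesis to get $\A^H$ free with the corresponding exponents, and conclude by a division-type theorem. The obstacle is then displaced to producing such an $H$ from the deletion--restriction identity $\chi(\A,t)=\chi(\A\setminus H,t)-\chi(\A^H,t)$.
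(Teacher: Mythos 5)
First, a remark on the comparison: the paper does not prove this statement at all --- it is quoted from Yoshinaga's work \cite{yoshinaga2007free} --- so your proposal can only be judged on its own merits.

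Your setup is the standard and correct one for this theorem: the Frobenius derivations $\theta_k=\sum_i x_i^{p^k}\partial_{x_i}$ do lie in $D(\A)$ with $\pdeg\theta_k=p^k$, the Moore determinant argument does show that the full arrangement $\mathcal{B}_l$ is free with basis $\theta_0,\dots,\theta_{l-1}$, and by Saito's criterion everything reduces to producing one element $\eta\in D(\A)$ of degree $d$ with $\det(\theta_0,\dots,\theta_{l-2},\eta)=cQ(\A)$, $c\ne0$; equivalently, in your notation, to showing $1\in\mathfrak a$. The partial facts you establish about $\mathfrak a$ (that $R\in\mathfrak a$ and that $V(\mathfrak a)$ has codimension at least $2$) are correct. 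But there is a genuine and essential gap exactly where you flag it: the hypothesis $\chi(\A,p^{l-2})=0$ is never used, and ``$\operatorname{height}(\mathfrak a)\ge 2$'' does not imply ``$\mathfrak a=S$''. Indeed, every step you carry out applies verbatim to an arbitrary central arrangement over $\mathbb{F}_p$; if height $\ge 2$ forced $\mathfrak a=S$, then every such arrangement would be free with exponents $(1,p,\dots,p^{l-2},d)$, which already fails for the Boolean arrangement $Q=xyz$ in $\mathbb{F}_p^3$ (there one computes $\mathfrak a=(y^{p-1}-z^{p-1},\,z^{p-1}-x^{p-1})$, a height-two ideal vanishing at $(1,1,1)$, and of course the exponents are $(1,1,1)$, not $(1,p,2-p)$). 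So the entire content of the theorem --- converting the covering condition $\bigcup_{H\in\A}H(\mathbb{F}_{p^{l-2}})=\mathbb{F}_{p^{l-2}}^l$, i.e.\ $Q(\A)\in(x_1^{p^{l-2}}-x_1,\dots,x_l^{p^{l-2}}-x_l)$, into the membership $Q(\A)\in(\Delta_1,\dots,\Delta_l)$ for the maximal minors $\Delta_j$ of the $(l-1)\times l$ Moore matrix --- is missing. (It is worth noting that this membership really is the whole problem: once $Q(\A)=\sum_j f_j\Delta_j$ is known, the derivation $\eta=\sum_j\pm f_j\partial_{x_j}$ lies in $D(\A)$ automatically, by the adjugate identity applied at the generic point of each $H\in\A$, since no hyperplane is contained in $V(\Delta_1,\dots,\Delta_l)$.) Your two suggested strategies for this step are not executed, and the fallback induction only displaces the difficulty: the division-type theorem you would need is not available in positive characteristic in this paper, and the base case $l=3$ is itself only quoted, not proved.
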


\section{From characteristic $0$ to characteristic $p$}\label{sec:freefromchar0tocharp}

From now on we will assume that $\A=\{H_1,\dots, H_n\}$ is a central arrangement in $\mathbb{Q}^l$. After getting rid of the denominators, we can suppose that $\alpha_i \in\mathbb{Z}[x_1,\dots, x_l]$ for all $i=1,\dots, n$, and hence that $Q(\A)=\prod_{i=1}^n\alpha_i\in\mathbb{Z}[x_1,\dots, x_l]$. Moreover, we can also assume that there exists no prime number $p$ that divides any $\alpha_i$.

Let $p$ be a prime number. Consider the image of $Q(\A)$ under the canonical homomorphism $\pi_p\colon \mathbb{Z}[x_1,\dots, x_l]\to \mathbb{F}_p[x_1,\dots, x_l]$. 
\begin{Definition} Let $\A$ be a central arrangement in $\mathbb{Q}^l$. We will call a prime number $p$ \textbf{good for $\A$} if $\pi_p(Q(\A))$ is reduced.
\end{Definition}

\begin{Lemma}\label{lem:finitenotgoodprimes} There is a finite number of primes $p$ that are not good for $\A$.
\end{Lemma}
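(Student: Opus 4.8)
The plan is to reduce reducedness of $\pi_p(Q(\A))$ to a condition on finitely many nonzero integers, and then to observe that only the primes dividing their product can possibly fail it.

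First I would record the elementary fact that, since $\mathbb{F}_p[x_1,\dots,x_l]$ is a UFD in which every nonzero linear form is irreducible, a finite product $\prod_i\ell_i$ of nonzero linear forms $\ell_i\in\mathbb{F}_p[x_1,\dots,x_l]$ is reduced if and only if no two of the $\ell_i$ are proportional. By the normalization adopted above, each $\alpha_i\in\mathbb{Z}[x_1,\dots,x_l]$ is a primitive linear form, so $\pi_p(\alpha_i)$ is again a nonzero linear form for every prime $p$; hence $\pi_p(Q(\A))=\prod_{i=1}^n\pi_p(\alpha_i)$ fails to be reduced precisely when $\pi_p(\alpha_i)$ and $\pi_p(\alpha_j)$ are proportional over $\mathbb{F}_p$ for some $i\neq j$.

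Next, fix $i<j$ and write $\alpha_i=\sum_k a_{ik}x_k$, $\alpha_j=\sum_k a_{jk}x_k$ with $a_{ik},a_{jk}\in\mathbb{Z}$. Since $H_i\neq H_j$, the forms $\alpha_i,\alpha_j$ are not proportional over $\mathbb{Q}$, so the $2\times l$ integer matrix with rows $(a_{ik})_k$ and $(a_{jk})_k$ has rank $2$, and therefore some $2\times 2$ minor $M_{ij}:=a_{ik}a_{jm}-a_{im}a_{jk}$ is a nonzero integer. If $\pi_p(\alpha_i)$ and $\pi_p(\alpha_j)$ were proportional over $\mathbb{F}_p$, the reduction of that matrix mod $p$ would have rank at most $1$, so every $2\times 2$ minor, in particular $M_{ij}$, would vanish mod $p$; equivalently $p\mid M_{ij}$. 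Contrapositively, $p\nmid M_{ij}$ forces $\pi_p(\alpha_i)$ and $\pi_p(\alpha_j)$ to be non-proportional.

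Finally, set $N:=\prod_{1\le i<j\le n}M_{ij}\in\mathbb{Z}\setminus\{0\}$. If $p$ is a prime not dividing $N$, then $p\nmid M_{ij}$ for all $i<j$, so by the previous step no two of the $\pi_p(\alpha_i)$ are proportional, whence $\pi_p(Q(\A))$ is reduced and $p$ is good for $\A$. Thus every prime not good for $\A$ divides $N$, and since $N\neq 0$ has only finitely many prime divisors, the conclusion follows. The only point requiring genuine care is the reducedness criterion for products of linear forms, together with the remark that the primitivity normalization keeps each $\pi_p(\alpha_i)$ a bona fide nonzero linear form; the rest of the argument is routine.
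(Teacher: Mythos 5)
Your proof is correct and follows essentially the same route as the paper: reduce non-reducedness of $\pi_p(Q(\A))$ to two of the reduced linear forms becoming proportional, and then show this happens only for finitely many $p$. The paper's own proof is a one-line assertion of exactly this (stated slightly imprecisely as $\pi_p(\alpha_i)=\pi_p(\alpha_j)$ rather than proportionality); your $2\times 2$ minor argument simply supplies the finiteness justification that the paper leaves implicit.
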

\begin{proof} We have that $\pi_p(Q(\A))$ is not reduced if and only if $\pi_p(\alpha_i)=\pi_p(\alpha_j)$ for some $i\ne j$ and this can happens only for a finite number of primes.
\end{proof}

Let now $p$ be a good prime for $\A$, and consider $\A_p$ the arrangement in $\mathbb{F}_p^l$ defined by $\pi_p(Q(\A))$. Hence, by construction, $Q(\A_p)=\pi_p(Q(\A))\ne0$ and it is reduced.

\begin{Theorem}\label{theo:fromchar0tocharP} If $\A$ is free in $\mathbb{Q}^l$ with exponents $(e_1,\dots, e_l)$, then $\A_p$ is free in $\mathbb{F}_p^l$ with exponents $(e_1,\dots, e_l)$, 
for all good primes except possibly a finite number of them.
\end{Theorem}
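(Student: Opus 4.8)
The plan is to exploit Saito's criterion (Theorem~\ref{theo:saitocrit}) together with the fact that freeness over $\mathbb{Q}$ gives us an explicit basis with integer coefficients that we can reduce mod $p$. Concretely, suppose $\A$ is free over $\mathbb{Q}$ with exponents $(e_1,\dots,e_l)$ and basis $\delta_1,\dots,\delta_l\in D(\A)$. After clearing denominators we may assume each $\delta_i$ has coefficients in $\mathbb{Z}[x_1,\dots,x_l]$, and moreover — dividing out any common prime factor of the coefficients of a given $\delta_i$ — that for each $i$ the coefficients of $\delta_i$ have no common prime divisor. Saito's criterion gives $\det(\delta_i(x_j))_{i,j}=c\,Q(\A)$ for some $c\in\mathbb{Q}\setminus\{0\}$; after the normalizations this identity holds in $\mathbb{Z}[x_1,\dots,x_l]$ with $c\in\mathbb{Z}\setminus\{0\}$ (using our standing assumption that no prime divides all of $Q(\A)$, and comparing contents on both sides).

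Next I would apply the reduction homomorphism $\pi_p$ to the identity $\det(\delta_i(x_j))_{i,j}=c\,Q(\A)$. Writing $\delta_i^{(p)}$ for the derivation over $\mathbb{F}_p$ obtained by reducing the coefficients of $\delta_i$, and noting that $\pi_p$ commutes with taking partial derivatives, we get $\det(\delta_i^{(p)}(x_j))_{i,j}=\pi_p(c)\,\pi_p(Q(\A))=\pi_p(c)\,Q(\A_p)$. For $p$ good, $Q(\A_p)\ne 0$ and is reduced. Now impose the finitely many exclusions: exclude the non-good primes (finite by Lemma~\ref{lem:finitenotgoodprimes}), exclude the finitely many primes dividing $c$, and — crucially — exclude the primes $p$ for which some $\delta_i^{(p)}$ vanishes identically or, more to the point, for which $\pi_p$ fails to preserve the property ``the coefficients of $\delta_i$ span the right thing''. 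For all remaining $p$ we have $\det(\delta_i^{(p)}(x_j))_{i,j}=\pi_p(c)\,Q(\A_p)$ with $\pi_p(c)\in\mathbb{F}_p\setminus\{0\}$, so Saito's criterion over $\mathbb{F}_p$ tells us $D(\A_p)$ is free with basis $\delta_1^{(p)},\dots,\delta_l^{(p)}$; and since $\pi_p$ preserves the polynomial degree of homogeneous polynomials, $\pdeg(\delta_i^{(p)})=e_i$, giving the stated exponents.

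The point I would scrutinize most carefully is the claim that, outside a finite set of primes, the reductions $\delta_i^{(p)}$ still lie in $D(\A_p)$ and still form a legitimate tuple to which Saito's criterion applies. Membership in $D(\A_p)$ is fine: $\delta_i(\alpha_j)\in(\alpha_j)\mathbb{Z}[x_1,\dots,x_l]$ reduces to $\delta_i^{(p)}(\pi_p(\alpha_j))\in(\pi_p(\alpha_j))\mathbb{F}_p[x_1,\dots,x_l]$ directly. The subtler issue is that once we have an honest identity $\det(\delta_i^{(p)}(x_j))_{i,j}=\pi_p(c)Q(\A_p)$ with $\pi_p(c)\ne 0$ and $Q(\A_p)$ of degree $n=\sum e_i$, part~(2) of Theorem~\ref{theo:saitocrit} applies with no further hypotheses — in particular the $\delta_i^{(p)}$ are automatically $S$-linearly independent because their coefficient determinant is nonzero. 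So the only genuine finite exclusions are: non-good primes, primes dividing $c$, and primes dividing the leading content adjustments made when clearing denominators. I would end by collecting these into a single finite set and concluding.

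One caveat to flag: the normalization step requires care when $p \mid n$, since then the Euler field is not forced to be one of the $\delta_i$ and the argument should not secretly assume $p\nmid n$; but the $\det$-based form of Saito's criterion is characteristic-free, so no such assumption is needed — this is exactly why I would route the whole argument through part~(2) of Theorem~\ref{theo:saitocrit} rather than through Remark~\ref{rem:reducttoD0}.
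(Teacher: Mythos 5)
Your proposal is correct and follows essentially the same route as the paper: reduce an integral basis of $D(\A)$ modulo $p$, check membership in $D(\A_p)$, and apply the determinant form of Saito's criterion, excluding the finitely many primes that are not good, that divide the constant $c$ in $\det(\delta_i(x_j))=cQ(\A)$, or that kill one of the reduced derivations. Your extra care about normalizing each $\delta_i$ to have coprime integer coefficients and about not assuming $p\nmid n$ only makes explicit what the paper's argument implicitly relies on.
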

\begin{proof} Let $\Delta=\{\delta_1,\dots, \delta_l\}$ be a basis of $D(\A)$ with $\pdeg(\delta_i)=e_i$ for all $i=1,\dots, l$. Since $\A$ has a defining equation with only integer coefficients, 
we can assume that every polynomial that appear in each $\delta\in\Delta$ is in $\mathbb{Z}[x_1,\dots, x_l]$. Hence we can consider $\bar{\delta}_1,\dots, \bar{\delta}_l\in \Der_{\mathbb{F}_p^l}$ 
 the image of $\delta_1,\dots, \delta_l$. 
We can assume that $p\nmid\delta$ for all $\delta\in\Delta$, and hence $\bar{\delta}\ne0$ for all $\delta\in\Delta$. This implies that $\pdeg(\bar{\delta}_i)=\pdeg(\delta_i)=e_i$ for all $i=1,\dots, l$.

By the definition of $D(\A)$, for each $\delta\in \Delta$ there exists $h\in\mathbb{Z}[x_1,\dots, x_l]$ such that $\delta(Q(\A))=hQ(\A)$. If we apply $\pi_p$ to this expression we obtain that $\bar{\delta}(Q(\A_p))=\pi_p(\delta(Q(\A)))=\pi_p(hQ(\A))=\pi_p(h)Q(A_p)$, and hence $\bar{\delta}\in D(\A_p)$.

By Theorem \ref{theo:saitocrit}, since in each $\delta\in\Delta$  every polynomial that appear have only integer coefficients, there exists $c\in\mathbb{Z}\setminus\{0\}$ such that $\det(\delta_i(x_j))_{i,j}=c Q(\A)$. 
If we apply $\pi_p$ to the previous equality we obtain that $\det(\bar{\delta}_i(x_j))_{i,j}=\pi_p(\det(\delta_i(x_j))_{i,j})=\pi_p(c Q(\A))=\pi_p(c)Q(\A_p)$. Hence if $p$ does not divide $c$, 
we have that $\pi_p(c)\in\mathbb{F}_p\setminus\{0\}$ and hence again by Theorem \ref{theo:saitocrit}, we have that $\bar{\delta}_1,\dots, \bar{\delta}_l$ are a basis of $D(\A_p)$. 
This proves that $\A_p$ is free with exponents $(e_1,\dots, e_l)$.
\end{proof}

By Lemma \ref{lem:finitenotgoodprimes}, the number of non good primes is finite. Hence we have the following.
\begin{Corollary}\label{corol:fromchar0tocharP} Let $\A$ be a central arrangement in $\mathbb{Q}^l$ and $p$ a large prime number. If $\A$ is free in $\mathbb{Q}^l$ with exponents $(e_1,\dots, e_l)$, then $\A_p$ is free in $\mathbb{F}_p^l$ with exponents $(e_1,\dots, e_l)$.
\end{Corollary}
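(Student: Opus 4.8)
The plan is to deduce the corollary directly from Theorem \ref{theo:fromchar0tocharP} and Lemma \ref{lem:finitenotgoodprimes}; the only real task is to make precise what ``large prime'' should mean, namely: large enough to avoid two explicit finite sets of bad primes.

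First I would isolate those two finite sets. Let $\Sigma_1$ be the set of primes that are not good for $\A$; by Lemma \ref{lem:finitenotgoodprimes}, $\Sigma_1$ is finite. Next, assuming $\A$ is free in $\mathbb{Q}^l$ with exponents $(e_1,\dots,e_l)$, Theorem \ref{theo:fromchar0tocharP} yields a finite set $\Sigma_2$ of good primes outside of which $\A_p$ is free in $\mathbb{F}_p^l$ with exponents $(e_1,\dots,e_l)$. Unwinding the proof of that theorem, $\Sigma_2$ may be taken to consist of the primes dividing the nonzero integer $c$ with $\det(\delta_i(x_j))_{i,j}=cQ(\A)$, together with the primes $p$ such that $p\mid\delta$ for some derivation $\delta$ in the fixed integral basis $\Delta$ of $D(\A)$; each such condition rules out only finitely many primes.

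Then I would set $N=\max(\Sigma_1\cup\Sigma_2)$ (and $N=1$ if this union is empty) and read ``large prime'' as any prime $p>N$. For such $p$ we have $p\notin\Sigma_1$, so $p$ is good for $\A$ and $\A_p$ is the well-defined reduced arrangement in $\mathbb{F}_p^l$ cut out by $\pi_p(Q(\A))$; and $p\notin\Sigma_2$, so Theorem \ref{theo:fromchar0tocharP} applies and gives that $\A_p$ is free in $\mathbb{F}_p^l$ with exponents $(e_1,\dots,e_l)$. This is exactly the assertion of the corollary.

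I do not expect any genuine obstacle: all the substance lies in Theorem \ref{theo:fromchar0tocharP} (where Saito's criterion, Theorem \ref{theo:saitocrit}, controls the passage $\delta\mapsto\bar\delta$) and in Lemma \ref{lem:finitenotgoodprimes}, while the corollary merely records that a union of two finite sets is finite. The one point deserving a line of care is that it is the same tuple $(e_1,\dots,e_l)$ that occurs in characteristic $p$, not just freeness that is inherited; but this is already part of the conclusion of Theorem \ref{theo:fromchar0tocharP}, so nothing new is needed.
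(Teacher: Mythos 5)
Your proposal is correct and follows exactly the paper's route: the paper derives the corollary immediately from Theorem \ref{theo:fromchar0tocharP} together with Lemma \ref{lem:finitenotgoodprimes}, interpreting ``large prime'' as one avoiding the finitely many non-good primes and the finitely many good exceptions of the theorem. Your only addition is to make the exceptional set $\Sigma_2$ explicit (primes dividing $c$ or some $\delta\in\Delta$), which is a harmless elaboration of the same argument.
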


From Example \ref{ex:specialexample}, we have the following.
\begin{Example} Consider the arrangement $\A$ of Example \ref{ex:specialexample} with $K=\mathbb{Q}$. Then the determinant of the coefficient matrix of the basis of $D(\A)$ is equal to
$2Q(\A)$. A direct computation shows that if we take another basis of $D(\A)$ with only integer coefficients, then the determinant of the coefficient matrix is equal to $cQ(\A)$ with $c\in 2\mathbb{Z}\setminus\{0\}$.
This is why over $\mathbb{F}_2$ the exponents of $\A$ change.
\end{Example}

Similarly to the previous example, we have the following

\begin{Example} Consider $\A$ the cone of $\A^{[-2,2]}$ the Shi-Catalan arrangement of type B  as in Example \ref{ex:specialexample2fields} with $K=\mathbb{Q}$. 
Then the determinant of the coefficient matrix of any basis of $D(\A)$ with only integer coefficients is equal to
$c Q(\A)$, with $c\in 35\mathbb{Z}\setminus\{0\}$. This is why over $\mathbb{F}_5$ and $\mathbb{F}_7$ the exponents of $\A$ change.
\end{Example}
 
\begin{Example}[\cite{ziegler1990matroid}] Let $\A_{all}$ be the arrangement of all hyperplanes of $\mathbb{F}_3^3$ and $L$ a line in $\mathbb{F}_3^3$. Consider the arrangement
$\A=\{H\in\A_{all}~|~H\nsupseteq L\}$ in $\mathbb{F}_3^3$. Now $\A$ is not free. However, if we consider $K$ a field such that $\charact(K)\ne3$ and $\A'$ an arrangement in $K^3$ satisfying $L(\A)\cong L(\A')$,
then $\A'$ is free with exponents $(1,4,4)$. Notice that the corresponding matroid is not representable over $\R$ or $\mathbb{Q}$.
\end{Example}
 

 The following is an example of a free arrangements in $\mathbb{Q}^l$ that is not free in $\mathbb{F}_p^l$, for some good prime $p$.
 \begin{Example} Consider $\A$ the arrangement in $\mathbb{Q}^4$ as the cone of $\A^{[-2,2]}$ the Shi-Catalan arrangement of type B. As described in \cite{abe2011freeness},
 $\A$ is free with exponents $(1,13,15,17)$. Moreover, the determinant of the coefficient matrix of a basis of $D(\A)$ with only integer coefficients is equal to
$c Q(\A)$, with $c\in 56595\mathbb{Z}\setminus\{0\}$. Notice that $56595=3\cdot5\cdot7^3\cdot11$. Now, $3$ is not a good prime but $5,7$ and $11$ are.
A direct computation shows that the arrangement $\A_5$ over $\mathbb{F}_5$ is free with exponents $(1,5,15,25)$. However, both $\A_7$ over $\mathbb{F}_7$
and $\A_{11}$ over $\mathbb{F}_{11}$ are not free.
 \end{Example}

\section{Homogeneous ideals in $\mathbb{Z}[x_1,\dots, x_l]$}
In this section, we study ideals in $\mathbb{Z}[x_1,\dots, x_l]$, their Betti numbers and their zero divisors. 
This will play an important role in Section \ref{sec:fromPtozerochar}.

\begin{Definition} Given a sequence of numbers $\{c_{i,j}\}$, we obtain a new sequence by a \textbf{cancellation} as follows. 
Fix a $j$, and choose $i$ and $i'$ so that one of the numbers is odd and the other is even. Then replace $c_{i,j}$ by $c_{i,j}-1$, 
and replace $c_{i',j}$ by $c_{i',j}-1$. We have a \textbf{consecutive cancellation} when $i' = i+1$.
\end{Definition}

The following result is a generalization of Proposition 5.2 from \cite{dalili2014dependence} and it will play an important role in Theorem \ref{theo:fromPtozero}.
\begin{Proposition}\label{prop:betticonsecutivcancel} Let $I$ be a homogeneous ideal of $R= \mathbb{Z}[x_1,\dots ,x_l]$ and $p$ a prime number that is a non-zero divisor in $R/I$. 
Then, the graded Betti numbers of $(R/I) \otimes_\mathbb{Z} \mathbb{Q}$ can be obtained from the one of $(R/I) \otimes_\mathbb{Z} \mathbb{F}_p$ by a sequence of consecutive cancellations.
\end{Proposition}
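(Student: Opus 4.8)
The plan is to resolve $R/I$ a single time over $R=\mathbb{Z}[x_1,\dots,x_l]$ and to extract both Betti tables from that one complex by base change, so that the two differ only through the torsion that surfaces upon reducing modulo $p$. Since $R$ is Noetherian and $M=R/I$ is finitely generated and graded, I may choose a graded free resolution
$$F_\bullet\colon\ \cdots\xrightarrow{d_2}F_1\xrightarrow{d_1}F_0\to M\to 0,\qquad F_i=\bigoplus_{j}R(-j)^{c_{i,j}},$$
by finitely generated free modules, with each $d_i$ homogeneous of degree $0$; I fix it once and for all and write $c_{i,j}$ for its graded ranks. Throughout I abbreviate $k[x]:=k[x_1,\dots,x_l]$ for $k=\mathbb{Q},\mathbb{F}_p$.

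First I would check that base change turns $F_\bullet$ into genuine resolutions over both residue characteristics, with the \emph{same} graded ranks. As a complex of $\mathbb{Z}$-modules, $F_\bullet$ consists of free—hence flat—modules and is quasi-isomorphic to $M$, so $H_i(F_\bullet\otimes_\mathbb{Z} k)=\operatorname{Tor}^\mathbb{Z}_i(M,k)$. For $k=\mathbb{Q}$, flatness gives $\operatorname{Tor}^\mathbb{Z}_{\ge 1}(M,\mathbb{Q})=0$; for $k=\mathbb{F}_p$, the global dimension of $\mathbb{Z}$ kills $\operatorname{Tor}_{\ge 2}$, while $\operatorname{Tor}^\mathbb{Z}_1(M,\mathbb{F}_p)=\{m\in M:pm=0\}=0$ precisely because $p$ is a non-zero divisor on $M=R/I$. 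Hence $F_\bullet\otimes_\mathbb{Z}\mathbb{Q}$ and $F_\bullet\otimes_\mathbb{Z}\mathbb{F}_p$ are graded free resolutions, over $\mathbb{Q}[x]$ and $\mathbb{F}_p[x]$, of $(R/I)\otimes_\mathbb{Z}\mathbb{Q}$ and $(R/I)\otimes_\mathbb{Z}\mathbb{F}_p$, and since $F_i\otimes_\mathbb{Z} k=\bigoplus_j k[x](-j)^{c_{i,j}}$ they share the graded ranks $c_{i,j}$. This is the step where the hypothesis on $p$ is indispensable.

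Next I would compute the actual Betti numbers by passing to the residue field. Writing $\beta^{k}_{i,j}=\dim_k\operatorname{Tor}^{k[x]}_i\big((R/I)\otimes_\mathbb{Z} k,\,k\big)_j$ and using the resolution above, $\beta^k_{i,j}=\dim_k H_i\big((F_\bullet\otimes_\mathbb{Z} k)\otimes_{k[x]}k\big)_j$. Tensoring a degree-$0$ differential with the residue field $k=k[x]/(x_1,\dots,x_l)$ extracts its constant (degree-preserving) part, which over $R$ is an integer matrix; in each fixed internal degree $j$ this gives an integer matrix $D_{i,j}$ of size $c_{i-1,j}\times c_{i,j}$. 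Because $d_{i-1}d_i=0$ and the degree-$0$ component of a product of two homogeneous matrices of nonnegative degrees is the product of their degree-$0$ components, one gets $D_{i-1,j}D_{i,j}=0$; thus for each $j$ the data $C^{(j)}_\bullet=(\mathbb{Z}^{c_{\bullet,j}},\,D_{\bullet,j})$ is a complex of finitely generated free $\mathbb{Z}$-modules, bounded below, with $\beta^k_{i,j}=\dim_k H_i\big(C^{(j)}_\bullet\otimes_\mathbb{Z} k\big)$.

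The comparison is now the universal coefficient theorem applied to the free $\mathbb{Z}$-complex $C^{(j)}_\bullet$. Writing the finitely generated group $H_i(C^{(j)}_\bullet)$ as $\mathbb{Z}^{r_{i,j}}\oplus T_{i,j}$ and letting $\tau_{i,j}$ be the number of cyclic summands of $T_{i,j}$ whose order is divisible by $p$, the exact sequence
$$0\to H_i(C^{(j)}_\bullet)\otimes_\mathbb{Z}\mathbb{F}_p\to H_i\big(C^{(j)}_\bullet\otimes_\mathbb{Z}\mathbb{F}_p\big)\to \operatorname{Tor}^\mathbb{Z}_1\!\big(H_{i-1}(C^{(j)}_\bullet),\mathbb{F}_p\big)\to 0$$
together with $H_i(C^{(j)}_\bullet\otimes_\mathbb{Z}\mathbb{Q})=H_i(C^{(j)}_\bullet)\otimes_\mathbb{Z}\mathbb{Q}$ yields $\beta^{\mathbb{Q}}_{i,j}=r_{i,j}$ and $\beta^{\mathbb{F}_p}_{i,j}=r_{i,j}+\tau_{i,j}+\tau_{i-1,j}$, whence
$$\beta^{\mathbb{F}_p}_{i,j}-\beta^{\mathbb{Q}}_{i,j}=\tau_{i,j}+\tau_{i-1,j}.$$
Only finitely many $\tau_{i,j}$ are nonzero, since $\beta^{\mathbb{F}_p}_{i,j}=0$ for $i$ large (minimal resolutions over $\mathbb{F}_p[x]$ have finite length) forces $\tau_{i,j}=0$ there. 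This difference has exactly the shape of a consecutive cancellation: performing, for each $i$ and $j$, a block of $\tau_{i,j}$ consecutive cancellations in homological degrees $i+1$ and $i$ at internal degree $j$ subtracts $\tau_{i,j}$ from $\beta^{\mathbb{F}_p}_{i+1,j}$ and $\tau_{i,j}$ from $\beta^{\mathbb{F}_p}_{i,j}$, so that the total subtracted at $(i,j)$ is $\tau_{i,j}+\tau_{i-1,j}$, turning $\beta^{\mathbb{F}_p}_{i,j}$ into $\beta^{\mathbb{Q}}_{i,j}$. Since $\tau_{i,j}\ge 0$ and each entry $\beta^{\mathbb{F}_p}_{i,j}=r_{i,j}+\tau_{i,j}+\tau_{i-1,j}$ dominates each multiplicity it is decremented by even after the incident blocks are performed, all entries stay nonnegative and the order is immaterial, which proves the claim. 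I expect the only genuinely delicate points to be the identification $\beta^k_{i,j}=\dim_k H_i\big(C^{(j)}_\bullet\otimes_\mathbb{Z} k\big)$ — that reducing the \emph{same} $\mathbb{Z}[x]$-resolution modulo the variables computes Betti numbers over both fields at once — and the verification that $C^{(j)}_\bullet$ is a complex; once these are in place the universal coefficient theorem converts the $p$-torsion of its homology directly into the required consecutive cancellations.
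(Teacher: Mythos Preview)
Your argument is correct and takes a genuinely different route from the paper. The paper localizes at $p$: it uses that $p$ being a non-zero divisor makes $(R/I)\otimes_{\mathbb{Z}}\mathbb{Z}_{(p)}$ flat over $\mathbb{Z}_{(p)}$, takes a \emph{minimal} graded free resolution $\mathbb{F}_\bullet$ over $\mathbb{Z}_{(p)}[x_1,\dots,x_l]$, and observes that flatness forces $\mathbb{F}_\bullet\otimes\mathbb{F}_p$ to remain a resolution, while minimality (entries of the differentials lie in $(p,x_1,\dots,x_l)$) forces it to remain minimal. Tensoring with $\mathbb{Q}$ then gives a not-necessarily-minimal resolution, and the structure theorem for graded free resolutions (\cite[Theorem~20.2]{eisenbud}) splits off a trivial complex, which is precisely a record of consecutive cancellations.

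Your approach instead fixes an arbitrary graded free resolution over $\mathbb{Z}[x_1,\dots,x_l]$, reduces to a single integer complex $C^{(j)}_\bullet$ for each internal degree $j$ by killing the variables, and reads the discrepancy $\beta^{\mathbb{F}_p}_{i,j}-\beta^{\mathbb{Q}}_{i,j}=\tau_{i,j}+\tau_{i-1,j}$ directly off the universal coefficient theorem. This is more hands-on: it avoids appealing to the splitting theorem for resolutions and produces an explicit combinatorial recipe for the cancellations in terms of the $p$-torsion counts $\tau_{i,j}$. The paper's proof is shorter and more structural; yours is more elementary and carries slightly more quantitative information. Both deploy the non-zero-divisor hypothesis at exactly the same point, namely to guarantee that the base change to $\mathbb{F}_p$ is still exact.
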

\begin{proof} Since $p$ is a non-zero divisor in $R/I$, then $(R/I) \otimes_\mathbb{Z} \mathbb{Z}_{(p)}$ is flat over $\mathbb{Z}_{(p)}$. 
Let $\mathbb{F}_\bullet$ be a minimal graded free $R\otimes_\mathbb{Z}\mathbb{Z}_{(p)}$-resolution of $(R/I) \otimes_\mathbb{Z} \mathbb{Z}_{(p)}$. 
Note that $\mathbb{F}_\bullet \otimes_\mathbb{Z} \mathbb{Z}_{(p)}$ is a free resolution of
$(R/I)\otimes_\mathbb{Z} \mathbb{Z}_{(p)}$ over $R \otimes_\mathbb{Z} \mathbb{Z}_{(p)} = \mathbb{Z}_{(p)}[x_1, \ldots, x_l]$, and 
that the graded Betti numbers of $\mathbb{F}_\bullet$ and $\mathbb{F}_\bullet \otimes_\mathbb{Z} \mathbb{Z}_{(p)}$ are the same.
The flatness of $(R/I) \otimes_\mathbb{Z} \mathbb{Z}_{(p)}$ over $\mathbb{Z}_{(p)}$ ensures that the complex $\mathbb{F}_\bullet \otimes_\mathbb{Z} \mathbb{F}_p =
\mathbb{F}_\bullet \otimes_\mathbb{Z} \mathbb{Z}_{(p)} \otimes_{\mathbb{Z}_{(p)}} \mathbb{F}_p$
is a free resolution of $(R/I) \otimes_\mathbb{Z} \mathbb{F}_p = (R/I) \otimes_\mathbb{Z} \mathbb{Z}_{(p)} \otimes_{\mathbb{Z}_{(p)}} \mathbb{F}_p$
over $R\otimes_\mathbb{Z} \mathbb{F}_p = \mathbb{F}_p[x_1, \dots, x_l]$.
Now matrices giving the maps in this complex are in $(x_1, \ldots, x_l)$, i.e., it is a minimal resolution. This shows that the graded Betti number of $\mathbb{F}_\bullet$ 
coincides with the one of  $(R/I) \otimes_\mathbb{Z} \mathbb{F}_p$.

Now, $\mathbb{F}_\bullet \otimes_{\mathbb{Z}_{(p)}} \mathbb{Q}$ is a graded free $(R\otimes_\mathbb{Z}\mathbb{Q})$-resolution of $(R/I)\otimes_\mathbb{Z}\mathbb{Q}$. 
Therefore we can write $\mathbb{F}_\bullet \otimes_{\mathbb{Z}_{(p)}} \mathbb{Q}=\mathbb{G}_\bullet\oplus\mathbb{G}'_\bullet$,
 where $\mathbb{G}_\bullet$ is a minimal graded free $(R\otimes_\mathbb{Z}\mathbb{Q})$-resolution of $(R/I)\otimes_\mathbb{Z}\mathbb{Q}$ and $\mathbb{G}'_\bullet$ is a 
 graded trivial complex of free $(R\otimes_\mathbb{Z}\mathbb{Q})$-modules \cite[Theorem 20.2]{eisenbud}. Therefore, the graded Betti numbers of 
 $(R/I) \otimes_\mathbb{Z} \mathbb{Q}$ can be obtained from the one of $(R/I) \otimes_\mathbb{Z} \mathbb{F}_p$ by a sequence of consecutive cancellations.
\end{proof}

From the notion of consecutive cancellations, we directly obtain the relation between homological dimensions.

\begin{Corollary}\label{corol:bettiinequalQtoP} Let $I$ be a homogeneous ideal of $R= \mathbb{Z}[x_1,\dots ,x_l]$ and $p$ a prime number that is a non-zero divisor in $R/I$. 
Then, the Betti numbers of $(R/I) \otimes_\mathbb{Z} \mathbb{Q}$ are smaller or equal to the one of $(R/I) \otimes_\mathbb{Z} \mathbb{F}_p$, and hence
$\hdim((R/I) \otimes_\mathbb{Z} \mathbb{Q})\le\hdim((R/I) \otimes_\mathbb{Z} \mathbb{F}_p)$.
\end{Corollary}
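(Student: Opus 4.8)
The statement to prove is Corollary~\ref{corol:bettiinequalQtoP}, which is an immediate consequence of Proposition~\ref{prop:betticonsecutivcancel}. The plan is to unwind what a consecutive cancellation does to Betti numbers and then pass to homological dimension.

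First I would recall the setup from the proposition: since $p$ is a non-zero divisor in $R/I$, the graded Betti numbers $\beta_{i,j}\bigl((R/I)\otimes_\mathbb{Z}\mathbb{Q}\bigr)$ are obtained from the graded Betti numbers $\beta_{i,j}\bigl((R/I)\otimes_\mathbb{Z}\mathbb{F}_p\bigr)$ by a finite sequence of consecutive cancellations. Each such cancellation, applied in some internal degree $j$ to a pair of indices $i$ and $i+1$, replaces $\beta_{i,j}$ by $\beta_{i,j}-1$ and $\beta_{i+1,j}$ by $\beta_{i+1,j}-1$. In particular every cancellation weakly decreases each individual graded Betti number, so after the whole sequence we get
$$\beta_{i,j}\bigl((R/I)\otimes_\mathbb{Z}\mathbb{Q}\bigr)\le\beta_{i,j}\bigl((R/I)\otimes_\mathbb{Z}\mathbb{F}_p\bigr)$$
for all $i,j$. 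Summing over $j$ gives the same inequality for the (ungraded, total) Betti numbers $\beta_i=\sum_j\beta_{i,j}$.

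Next I would translate this into a statement about homological dimension. Recall that $\hdim(M)=\max\{\,i : \beta_i(M)\neq0\,\}$ for a finitely generated graded module $M$ over a polynomial ring (the projective dimension, read off from the minimal free resolution). If $\hdim\bigl((R/I)\otimes_\mathbb{Z}\mathbb{Q}\bigr)=d$, then $\beta_d\bigl((R/I)\otimes_\mathbb{Z}\mathbb{Q}\bigr)\neq0$, hence $\beta_d\bigl((R/I)\otimes_\mathbb{Z}\mathbb{F}_p\bigr)\ge\beta_d\bigl((R/I)\otimes_\mathbb{Z}\mathbb{Q}\bigr)\neq0$, so $\hdim\bigl((R/I)\otimes_\mathbb{Z}\mathbb{F}_p\bigr)\ge d$. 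This is exactly the claimed inequality
$$\hdim\bigl((R/I)\otimes_\mathbb{Z}\mathbb{Q}\bigr)\le\hdim\bigl((R/I)\otimes_\mathbb{Z}\mathbb{F}_p\bigr).$$

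There is essentially no obstacle here: the content is entirely in Proposition~\ref{prop:betticonsecutivcancel}, and the corollary is the purely formal observation that consecutive cancellations do not increase Betti numbers and that homological dimension is the top nonvanishing homological degree. The only point worth stating carefully is that a cancellation cannot create a nonzero Betti number where there was a zero one (it only subtracts), so the support of the Betti table over $\mathbb{Q}$ is contained in that over $\mathbb{F}_p$; that suffices for the homological dimension comparison.
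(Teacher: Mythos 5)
Your proof is correct and matches the paper's intent exactly: the paper gives no explicit argument, stating only that the corollary follows directly from the notion of consecutive cancellations in Proposition~\ref{prop:betticonsecutivcancel}, and your write-up simply makes that implicit reasoning precise (each cancellation only subtracts, so the Betti table over $\mathbb{Q}$ is entrywise bounded by the one over $\mathbb{F}_p$, and $\hdim$ is the top nonvanishing homological degree). No issues.
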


In general, given $I$ an ideal of $R= \mathbb{Z}[x_1,\dots ,x_l]$, the number of zero divisor in $R/I$ is infinite.
However, if we restrict our attention to zero divisors that are prime numbers, we have the following.
\begin{Proposition}\label{prop:finitezerodivprimes} Let $I$ be an ideal of $R=\mathbb{Z}[x_1,\dots ,x_l]$. Then the number of distinct prime numbers
that are zero divisors in $R/I$ is finite.
\end{Proposition}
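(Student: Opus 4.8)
The plan is to reduce the statement to a Noetherian/finiteness argument about associated primes. First I would recall that since $R = \mathbb{Z}[x_1,\dots,x_l]$ is Noetherian, the ideal $I$ has only finitely many associated primes $\mathfrak{p}_1,\dots,\mathfrak{p}_m$ in $R$, and the set of zero divisors of $R/I$ is exactly $\bigcup_{k=1}^m \mathfrak{p}_k$. A prime number $p$ is a zero divisor in $R/I$ if and only if the constant polynomial $p$ lies in some $\mathfrak{p}_k$. So the task becomes: show that for each associated prime $\mathfrak{p}_k$, only finitely many prime numbers $p$ satisfy $p \in \mathfrak{p}_k$.

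Next I would analyze a single prime ideal $\mathfrak{p}$ of $R$ and consider its contraction $\mathfrak{p} \cap \mathbb{Z}$, which is a prime ideal of $\mathbb{Z}$, hence either $(0)$ or $(q)$ for a single prime number $q$. If $\mathfrak{p} \cap \mathbb{Z} = (0)$, then no prime number lies in $\mathfrak{p}$ at all; if $\mathfrak{p} \cap \mathbb{Z} = (q)$, then the only prime number contained in $\mathfrak{p}$ is $q$ itself, since $p \in \mathfrak{p}$ forces $p \in \mathfrak{p}\cap\mathbb{Z} = (q)$, whence $q \mid p$ and $p = q$. Either way, each associated prime contributes at most one prime number to the set of prime zero divisors.

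Combining these, the set of prime numbers that are zero divisors in $R/I$ is contained in $\{q : q = \mathfrak{p}_k \cap \mathbb{Z} \text{ for some } k\}$, a set of size at most $m$, hence finite. I do not expect a serious obstacle here: the only point requiring a little care is the standard fact that the zero divisors of a module over a Noetherian ring are precisely the union of its associated primes, and that there are finitely many such primes; both are classical (e.g. \cite[Chapter 3]{eisenbud}). One could alternatively avoid associated primes entirely by passing to a primary decomposition $I = \bigcap Q_k$ and arguing with $\sqrt{Q_k}$, but the associated-primes formulation is cleanest. The mild subtlety worth stating explicitly is why $R/I$ being possibly non-domain and having infinitely many zero divisors overall is consistent with only finitely many of them being rational primes — precisely because each such prime must be swallowed whole by a single associated prime whose intersection with $\mathbb{Z}$ pins it down uniquely.
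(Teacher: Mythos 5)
Your proof is correct, but it takes a genuinely different route from the paper. The paper invokes generic freeness (Theorem 14.4 of \cite{eisenbud}): there is a nonzero integer $a$ such that $(R/I)[a^{-1}]$ is a free $\mathbb{Z}[a^{-1}]$-module, so every prime number that is a zero divisor on $R/I$ must divide $a$, and the conclusion follows from unique factorization in $\mathbb{Z}$. You instead use the classical theory of associated primes: $R$ is Noetherian, so $\Ass_R(R/I)$ is a finite set $\{\mathfrak{p}_1,\dots,\mathfrak{p}_m\}$ whose union is exactly the set of zero divisors on $R/I$, and each contraction $\mathfrak{p}_k\cap\mathbb{Z}$ is either $(0)$ or $(q)$ for a single rational prime $q$, so each $\mathfrak{p}_k$ contains at most one prime number. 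Both arguments are complete (yours implicitly assumes $R/I\ne 0$, but the statement is vacuous otherwise). Your version is arguably more self-contained, resting only on primary decomposition rather than on the generic freeness lemma, and it gives a slightly sharper description of the bad primes: they are precisely the generators of the nonzero contractions $\mathfrak{p}_k\cap\mathbb{Z}$, so there are at most $|\Ass_R(R/I)|$ of them. The paper's formulation, on the other hand, fits the computational thread of the surrounding section (Gr\"obner bases over $\mathbb{Z}$ and lucky primes), since the single integer $a$ it produces is the kind of certificate one would actually try to compute; it also dovetails with the localization argument already used in Proposition \ref{prop:betticonsecutivcancel}.
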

\begin{proof}  By Theorem 14.4 of \cite{eisenbud}, there exists $a\in\mathbb{Z}\setminus\{0\}$ such that $(R/I)[a^{-1}]$ is a free $\mathbb{Z}[a^{-1}]$-module. This implies that 
the set of distinct prime numbers that are zero divisors in $R/I$ is included in the set of distinct prime numbers that divide $a$, that is finite by the unique factorization theorem.
\end{proof}

Also if the number of prime numbers that are zero divisors in $R/I$ is finite, it might be difficult to compute them.
The rest of this section is dedicated to describe a method to compute them.

Consider $I$ an ideal of $R = \mathbb{Z}[x_1,\dots ,x_l]$ and $\sigma$ a term ordering on $R$. As for the case with polynomial rings over fields, we can study the  theory of Gr\"obner basis
for ideals of $R$. We refer to Chapter 4.5 of \cite{adams1994introduction} for details.
\begin{Definition} Let $I$ be an ideal of $R=\mathbb{Z}[x_1,\dots ,x_l]$, $\sigma$ a term ordering on $R$ and $G=\{g_1,\dots, g_t\}$ a set of non-zero polynomials in $I$.
Then we say that $G$ is a \textbf{minimal strong $\sigma$-Gr\"obner basis for $I$} if the following conditions hold true
\begin{enumerate}
\item $G$ forms a set of generators of $I$;
\item for each $f\in I$, there exists $i\in\{1,\dots,t\}$ such that $\LM_\sigma(g_i)$ divides $\LM_\sigma(f)$;
\item if $i\ne j$, then $\LM_\sigma(g_i)$ does not divide $\LM_\sigma(g_j)$.
\end{enumerate}
\end{Definition}

\begin{Remark}[c.f. \cite{adams1994introduction}, Lemma 4.5.8]\label{rem:minstronggbiggb} A minimal strong $\sigma$-Gr\"obner basis of an ideal $I$ of $\mathbb{Z}[x_1,\dots ,x_l]$ is also a $\sigma$-Gr\"obner basis of $I$. 
\end{Remark}

\begin{Proposition}[\cite{adams1994introduction}, Exercise 4.5.9] Let $I$ be a non-zero ideal of $R=\mathbb{Z}[x_1,\dots ,x_l]$ and $\sigma$ a term ordering on $R$. 
Then there always exists a minimal strong $\sigma$-Gr\"obner basis of $I$
\end{Proposition}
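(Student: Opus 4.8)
The plan is to build the required basis in two steps: first produce \emph{some} finite strong $\sigma$-Gr\"obner basis of $I$, and then interreduce it until conditions $(2)$ and $(3)$ hold, taking care that the interreduction never destroys the generating condition $(1)$.

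For the first step I would invoke the theory of Gr\"obner bases over a principal ideal domain (Chapter~4.5 of \cite{adams1994introduction}): since $\mathbb{Z}$ is a principal ideal domain and $I\neq 0$, the ideal $I$ has a finite strong $\sigma$-Gr\"obner basis $G=\{h_1,\dots,h_m\}$, meaning that $G\subseteq I\setminus\{0\}$, that $G$ generates $I$, and that for every $f\in I\setminus\{0\}$ there is $k$ with $\LM_\sigma(h_k)$ dividing $\LM_\sigma(f)$ in $R$; discarding repetitions, we may assume the $h_i$ are pairwise distinct. (If one wants a self-contained argument, such a $G$ can instead be produced by a Buchberger-type algorithm over $R$, whose termination uses only that $R$ is Noetherian, but quoting the book is shorter.)

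For the second step I would iterate the following reduction. Suppose distinct indices $i,j$ satisfy $\LM_\sigma(h_i)\mid\LM_\sigma(h_j)$; since divisibility of monomials in $R=\mathbb{Z}[x_1,\dots,x_l]$ requires divisibility of both the power products and the integer coefficients, there is a single term $t\in R$ with $t\,\LM_\sigma(h_i)=\LM_\sigma(h_j)$. Replace $h_j$ by $h_j':=h_j-t\,h_i$, deleting it if $h_j'=0$. This replacement leaves the generated ideal unchanged (indeed $h_j=h_j'+t\,h_i$, which is exactly why one replaces rather than simply deletes), and it leaves the strong-division property intact: given $f\in I\setminus\{0\}$, choose in the original set $h_k$ with $\LM_\sigma(h_k)\mid\LM_\sigma(f)$; if $k\neq j$ then $h_k$ survives, while if $k=j$ then $\LM_\sigma(h_i)\mid\LM_\sigma(h_j)\mid\LM_\sigma(f)$ and $h_i$ (with $i\neq j$) survives. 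Finally, the leading terms of $h_j$ and $t\,h_i$ coincide and hence cancel, so either $h_j'=0$ or the power product underlying $\LM_\sigma(h_j')$ is strictly smaller, with respect to $\sigma$, than the one underlying $\LM_\sigma(h_j)$, while no other element's leading term moves.

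To see that the iteration stops, I would attach to each intermediate set the finite multiset of the power products occurring in its leading terms; since $\sigma$ is a well-ordering of the power products, the induced multiset order is well-founded, and by the last observation each reduction step strictly decreases this multiset. Hence after finitely many steps we reach a finite set $\{g_1,\dots,g_t\}\subseteq I\setminus\{0\}$ that still generates $I$ and is still a strong $\sigma$-Gr\"obner basis (so condition $(2)$ holds), and that admits no further reduction step, i.e.\ no distinct $i,j$ with $\LM_\sigma(g_i)\mid\LM_\sigma(g_j)$ (so condition $(3)$ holds). I expect the real work to be in the bookkeeping of the reduction step: replacing rather than deleting so as to keep $(1)$, checking that the strong-division property is inherited at each step, and choosing the multiset of leading power products (rather than, say, a cardinality count) as the termination measure so that well-foundedness applies.
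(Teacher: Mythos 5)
Your argument is correct: producing a finite strong $\sigma$-Gr\"obner basis from the PID theory and then interreducing (replacing $h_j$ by $h_j-t\,h_i$ rather than deleting it, preserving generation and the strong-division property, with termination via the well-founded multiset order on leading power products) is exactly the standard solution to the cited exercise. The paper itself offers no proof, deferring entirely to \cite{adams1994introduction}, so there is nothing to compare against; your write-up fills that gap correctly.
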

Notice that in general, a minimal strong $\sigma$-Gr\"obner basis is not unique. Consider the following example.
\begin{Example} Consider the ideal $I=(x^2-y,3y)$ in $\mathbb{Z}[x,y]$ and $\sigma=\degrevlex$. Then both 
$\{x^2-y,3y\}$ and $\{x^2+y,3y\}$ are minimal strong $\sigma$-Gr\"obner basis for $I$.
\end{Example}
\begin{Lemma}\label{lem:sameLMandLC} Let $I$ be an ideal of $R=\mathbb{Z}[x_1,\dots,x_l]$, and $\sigma$ a term ordering on $R$. Let $G_1$ and $G_2$ be two minimal strong 
$\sigma$-Gr\"obner bases of $I$. Then $\{\LM_\sigma(g)~|~g\in G_1\} = \{\LM_\sigma(g)~|~g\in G_2\}$. Consequently we have $|G_1| = |G_2|$ 
and $\{\LC_\sigma(g)~|~g\in G_1\} = \{\LC_\sigma(g)~|~g\in G_2\}$.
\end{Lemma}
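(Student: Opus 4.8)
The plan is to treat the three assertions in turn: the equality of leading monomials, and hence of cardinalities, uses only conditions (2) and (3) and amounts to the uniqueness of minimal monomial generators, while the statement on leading coefficients is the real content and needs the ``strong'' part of the hypothesis. For the leading monomials, take any $g'\in G_2$; it is a nonzero element of $I$, so by condition (2) for $G_1$ there is $g_i\in G_1$ with $\LM_\sigma(g_i)\mid\LM_\sigma(g')$, and since $g_i\in I$, condition (2) for $G_2$ gives $g_j\in G_2$ with $\LM_\sigma(g_j)\mid\LM_\sigma(g_i)$, hence $\LM_\sigma(g_j)\mid\LM_\sigma(g')$. If $g_j\ne g'$ this contradicts condition (3) for $G_2$, so $g_j=g'$ and therefore $\LM_\sigma(g_i)=\LM_\sigma(g')$, which exhibits $\LM_\sigma(g')$ as a leading monomial of $G_1$; symmetry gives the reverse inclusion. (Said differently, conditions (2) and (3) express precisely that $\{\LM_\sigma(g):g\in G\}$ is the unique minimal generating set of the monomial ideal generated by $\{\LM_\sigma(f):0\ne f\in I\}$.) Condition (3) also forces $g\mapsto\LM_\sigma(g)$ to be injective on each $G_k$, since equal leading monomials would divide one another; hence $|G_1|=|\{\LM_\sigma(g):g\in G_1\}|=|\{\LM_\sigma(g):g\in G_2\}|=|G_2|$.

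For the leading coefficients, fix a leading monomial $m$; by the previous paragraph $m=\LM_\sigma(g)$ for a unique $g\in G_1$ and $m=\LM_\sigma(g')$ for a unique $g'\in G_2$. Here I would use that a minimal strong $\sigma$-Gr\"obner basis is in particular a strong Gr\"obner basis in the sense of \cite{adams1994introduction}, i.e. that for every nonzero $f\in I$ there is $g_i\in G_1$ whose leading \emph{term} divides that of $f$, meaning $\LM_\sigma(g_i)\mid\LM_\sigma(f)$ together with $\LC_\sigma(g_i)\mid\LC_\sigma(f)$ in $\mathbb{Z}$. Applying this with $f=g'$ yields some $g_i\in G_1$ with $\LM_\sigma(g_i)\mid m$, and since the leading monomials of $G_1$ form an antichain under divisibility, $g_i=g$; thus $\LC_\sigma(g)\mid\LC_\sigma(g')$. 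Exchanging the roles of $G_1$ and $G_2$ gives $\LC_\sigma(g')\mid\LC_\sigma(g)$, so $\LC_\sigma(g)=\pm\LC_\sigma(g')$, and under the usual normalization that leading coefficients are positive (equivalently, reading the claimed equality of sets up to units of $\mathbb{Z}$) we conclude $\LC_\sigma(g)=\LC_\sigma(g')$ and hence $\{\LC_\sigma(g):g\in G_1\}=\{\LC_\sigma(g):g\in G_2\}$.

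The step I expect to be delicate is the last one, because the definition recalled above is phrased purely through leading monomials, so one has to make sure it genuinely delivers the term-level divisibility $\LC_\sigma(g_i)\mid\LC_\sigma(f)$. The clean way around this is that ``strong'' is by definition in \cite{adams1994introduction} the term-level condition, and Remark \ref{rem:minstronggbiggb} together with the associated one-step reduction property supplies exactly what is needed; alternatively, one argues directly that condition (1) and the $\LM_\sigma$-conditions force $\langle\LT_\sigma(g):g\in G\rangle=\LT_\sigma(I)$ as ideals of $\mathbb{Z}[x_1,\dots,x_l]$, with $\{\LT_\sigma(g):g\in G\}$ an antichain of terms carrying the least possible coefficients, and then appeals to the uniqueness up to units of the minimal term-generating set of $\LT_\sigma(I)$. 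The residual sign ambiguity is intrinsic to working over $\mathbb{Z}$ rather than a field.
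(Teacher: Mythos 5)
Your argument for the equality of the sets of leading monomials is essentially the paper's: assume some $\LM_\sigma(g')$ is missed, bounce the divisibility back and forth between $G_1$ and $G_2$ using condition (2), and invoke the antichain condition (3) to force equality. The paper's proof in fact stops there and dismisses the cardinality and leading-coefficient claims with the single word ``Consequently,'' so the second half of your write-up is genuinely additional content rather than a different route. Your treatment of the leading coefficients is correct provided one reads the divisibility in condition (2) at the level of \emph{terms}, i.e.\ $\LM_\sigma(g_i)$ carries its coefficient, so that $\LM_\sigma(g_i)\mid\LM_\sigma(f)$ entails $\LC_\sigma(g_i)\mid\LC_\sigma(f)$ in $\mathbb{Z}$. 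That this is the intended reading (it is the definition of ``strong'' in \cite{adams1994introduction}) is confirmed by the paper's own proof of Proposition~\ref{prop:luckyisnzd}, where the quotient $\LM_\sigma(pf)/\LM_\sigma(g_i)$ is manipulated as a term with an integer coefficient; under that reading your two-sided divisibility argument closes the loop. The residual $\pm$ ambiguity you flag is real over $\mathbb{Z}$ and is silently normalized away in the paper (its own example $\{x^2-y,3y\}$ versus $\{x^2+y,3y\}$ keeps leading coefficients positive), so stating the conclusion up to units, or fixing positive leading coefficients, is the honest way to phrase it. In short: same proof for the part the paper actually proves, plus a correct completion of the part it asserts without proof.
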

\begin{proof}
Assume that there exists $t\in\LM_\sigma(G_1)\setminus\LM_\sigma(G_2)$. Now, $t\in \LM_\sigma(I)$ and hence there exists $f\in I$ such that $t=\LM_\sigma(f)$.
Since $G_2$ is a minimal strong $\sigma$-Gr\"obner basis for $I$, there exists $g\in G_2$ such that $\LM_\sigma(g)=t'$ divides $\LM_\sigma(f)=t$. With a similar argument, we see that $t'$, and hence $t$,
is a multiple of an element $t''$ in $\LM_\sigma(G_1)$. The minimality of $G_1$ implies that $t''=t$. Hence, $t=t'\in \LM_\sigma(G_2)$, but this is a contradiction.
This shows that $\LM_\sigma(G_1)\subseteq\LM_\sigma(G_2)$. 

With the same argument we can show that $\LM_\sigma(G_2)\subseteq\LM_\sigma(G_2)$, and hence that $\LM_\sigma(G_1)=\LM_\sigma(G_2)$.
\end{proof}

\begin{Remark} The previous lemma implies that $\{\LM_\sigma(g)~|~g\in G\}$ generates the monomial ideal $\LM_\sigma(I)$.
\end{Remark}

By Lemma \ref{lem:sameLMandLC}, we can introduce the following Definition. See \cite{pauer1992lucky}, for more details.

\begin{Definition} Let $I$ be an ideal of $R=\mathbb{Z}[x_1,\dots,x_l]$, and $\sigma$ be a term ordering on $R$. 
If a prime number $p$ does not divide the leading coefficient of any polynomial 
in a minimal strong $\sigma$-Gr\"obner basis for $I$, then we will call $p$ \textbf{$\sigma$-lucky for $I$}.
\end{Definition}
\begin{Remark}\label{rem:finiteluckyprime} Given $I$ an ideal of $R=\mathbb{Z}[x_1,\dots,x_l]$ and $\sigma$ be a term ordering on $R$, since a minimal strong $\sigma$-Gr\"obner basis is finite,
 then the number of primes that are not $\sigma$-lucky for $I$ is finite.
\end{Remark}
Notice that the list of $\sigma$-lucky primes for an ideal depends on the choice of term ordering.
\begin{Example}\label{ex:luckyprimeschange} Consider the ideal $I=(3x-y)$ in $\mathbb{Z}[x,y]$. Consider on $\mathbb{Z}[x,y]$ a term ordering $\sigma$
such that $x>_\sigma y$. Then $\{3x-y\}$ is a minimal strong $\sigma$-Gr\"obner basis of $I$, and hence every prime $p\ne 3$ is $\sigma$-lucky for $I$. 
However, if we consider on $\mathbb{Z}[x,y]$ a term ordering $\tau$
such that $y>_\tau x$, then $\{y-3x\}$ is a minimal strong $\tau$-Gr\"obner basis of $I$. Hence every prime $p$ is $\tau$-lucky for $I$, also $p=3$.
\end{Example}

Notice that in general, we cannot determine the $\sigma$-lucky primes from the coefficients that appear in a set of generators.
\begin{Example} Consider the ideal $I=(2x+3y,x-y)$ in $\mathbb{Z}[x,y]$ with $\sigma=\degrevlex$. In this case,
$\{5y,x-y\}$ is a minimal strong $\sigma$-Gr\"obner basis of $I$, and hence $p=5$ is the only non $\sigma$-lucky prime of $I$.
\end{Example}

We are now ready to describe the relation between lucky primes and primes that are non-zero divisors.
\begin{Proposition}\label{prop:luckyisnzd} Let $I$ be an ideal of $R=\mathbb{Z}[x_1,\dots,x_l]$ and $\sigma$ a term ordering on $R$. If $p$ is a 
$\sigma$-lucky prime for $I$, then $p$ is a non-zero divisor in $R/I$.
\end{Proposition}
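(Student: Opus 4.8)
The plan is to establish the equivalent statement that multiplication by $p$ is injective on $R/I$, i.e.\ that $pf\in I$ forces $f\in I$ for every $f\in R$. If $I=0$ this is clear since $R=\mathbb{Z}[x_1,\dots,x_l]$ is a domain, so I would assume $I\neq 0$ and fix a minimal strong $\sigma$-Gröbner basis $G=\{g_1,\dots,g_t\}$ of $I$; by the hypothesis that $p$ is $\sigma$-lucky, $p\nmid\LC_\sigma(g_i)$ for every $i$. Since $R$ is a domain, $pf=0$ forces $f=0\in I$, so I may also assume $f\neq 0$, and then $pf\neq 0$ as well.

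The main argument proceeds by well-founded induction on $\LM_\sigma(f)$, which is legitimate because $\sigma$ is a well-ordering of the monomials. First observe that $\LT_\sigma(pf)=p\cdot\LT_\sigma(f)$, so in particular $\LM_\sigma(pf)=\LM_\sigma(f)$. As $pf$ is a nonzero element of $I$ and $G$ is a strong $\sigma$-Gröbner basis, there is an index $i$ with $\LT_\sigma(g_i)$ dividing $\LT_\sigma(pf)$ in $R$; writing $\LT_\sigma(f)=c\,m$ and $\LT_\sigma(g_i)=c_i\,m_i$ with $c,c_i\in\mathbb{Z}$ and $m,m_i$ monomials, this says $m_i\mid m$ and $c_i\mid pc$ in $\mathbb{Z}$. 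Here both hypotheses on $p$ are used: $p$ prime together with $p\nmid c_i$ gives $\gcd(p,c_i)=1$, hence $c_i\mid c$. I then set $u=c/c_i\in\mathbb{Z}$, $m'=m/m_i$, and $f'=f-u\,m'\,g_i\in R$. Since $\LT_\sigma(u\,m'\,g_i)=u\,c_i\,m'\,m_i=c\,m=\LT_\sigma(f)$, these leading terms cancel, so either $f'=0$ or $\LM_\sigma(f')<_\sigma\LM_\sigma(f)$; moreover $pf'=pf-u\,m'\,(p\,g_i)\in I$ because $pf\in I$ and $g_i\in I$. By the inductive hypothesis (or trivially when $f'=0$), $f'\in I$, and therefore $f=f'+u\,m'\,g_i\in I$, which closes the induction.

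The one point that needs care — and the main obstacle — is invoking the correct divisibility statement for Gröbner bases over $\mathbb{Z}$: I need that a nonzero element of $I$ has its leading \emph{term}, coefficient included, divisible by the leading term of some member of $G$, not merely its leading monomial divisible by a leading monomial. This is the characteristic property of a strong $\sigma$-Gröbner basis over a principal ideal domain (Chapter 4.5 of \cite{adams1994introduction}), and it is available here because, by Remark~\ref{rem:minstronggbiggb}, $G$ is in particular a $\sigma$-Gröbner basis of $I$. Everything else is a direct, if slightly bookkeeping-heavy, reduction, and the primality of $p$ enters precisely once but essentially, in passing from $c_i\mid pc$ to $c_i\mid c$; without it the relation $p\nmid c_i$ would not suffice to perform the cancellation step.
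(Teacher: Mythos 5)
Your proof is correct and follows essentially the same route as the paper: both reduce $pf$ against a minimal strong $\sigma$-Gröbner basis and use that $p$ is prime and does not divide any $\LC_\sigma(g_i)$ to conclude that each reduction step preserves divisibility by $p$ (equivalently, your $c_i\mid c$), so that $f$ itself lies in $I$. Your packaging as a well-founded induction on $\LM_\sigma(f)$ is a slightly cleaner way of handling the termination that the paper delegates to the division process, but the underlying argument is the same.
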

\begin{proof} Let $G=\{g_1,\dots,g_t\}$ be a minimal strong $\sigma$-Gr\"obner basis of $I$ and $p$ a $\sigma$-lucky prime for $I$. 
Suppose that $p$ is a zero divisor in $R/I$. This implies that there exists $f\in R\setminus I$ such that $pf\in I$. By definition of minimal strong $\sigma$-Gr\"obner basis,
there exists $i\in\{1,\dots,t\}$ such that $\LM_\sigma(g_i)$ divides $\LM_\sigma(pf)$. We can now consider $h_1=pf-\frac{\LM_\sigma(pf)}{\LM_\sigma(g_i)}g_i\in I$. Since $p$ $\sigma$-lucky,
$p| \frac{\LM_\sigma(pf_1)}{\LM_\sigma(g_j)}$, and hence $h_1=pf_1$, for some $f_1\in R$. 
We can now repeat the process with $pf_1\in I$ and obtain $pf_2=pf_1-\frac{\LM_\sigma(pf_1)}{\LM_\sigma(g_j)}g_j\in I$, for some $j$. 
Since $G$ is a $\sigma$-Gr\"obner basis this process will end after a finite number of steps, i.e. we will obtain that there exists a $k\ge1$ such that $pf_k\in I$ with $pf_k\ne0$ but $pf_{k+1}=0$.
Retrieving each step, we can write $pf=\sum_{i=1}^tpa_ig_i$, for some $a_i\in R$. This implies that $f=\sum_{i=1}^ta_ig_i\in I$, but this is impossible by assumption.
\end{proof}

Notice that in general the set of distinct prime numbers that are non-zero divisors in $R/I$ contains strictly the set of $\sigma$-lucky primes for $I$.
\begin{Example} Consider the ideal of Example \ref{ex:luckyprimeschange} with the term ordering~$\sigma$. Then $p=3$ is a non-zero divisor in $R/I$, but
it is not a $\sigma$-lucky prime.
\end{Example}

\section{From characteristic $p$ to characteristic $0$}\label{sec:fromPtozerochar}
As in Section \ref{sec:freefromchar0tocharp}, we will assume that $\A=\{H_1,\dots, H_n\}$ is a central arrangement in $\mathbb{Q}^l$ and hence that $\alpha_i \in\mathbb{Z}[x_1,\dots, x_l]$ for all $i=1,\dots, n$.
Moreover, we can also assume that there exists no prime number $p$ that divides any $\alpha_i$.

In the rest of the paper, we will denote by $J(\A)_\mathbb{Z}$ the ideal of $\mathbb{Z}[x_1,\dots ,x_l]$ generated by $Q(\A)$ and its partial derivatives.

\begin{Theorem}\label{theo:fromPtozero} Let $\A=\{H_1,\dots, H_n\}$ be a central arrangement in $\mathbb{Q}^l$. Let $p$
be a good prime number for $\A$ that does not divide $n$ and that is a non-zero divisor in $\mathbb{Z}[x_1,\dots ,x_l]/J(\A)_\mathbb{Z}$. If $\A_p$ is free in $\mathbb{F}_p^l$ with exponents $(e_1,\dots, e_l)$, 
then $\A$ is free in $\mathbb{Q}^l$ with exponents $(e_1,\dots, e_l)$.
\end{Theorem}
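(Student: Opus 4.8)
The plan is to run Terao's criterion (Theorem \ref{theo:freCMcod2}) on both sides and to bridge $\mathbb{F}_p$ and $\mathbb{Q}$ via the homological comparison of Proposition \ref{prop:betticonsecutivcancel} and Corollary \ref{corol:bettiinequalQtoP}. Write $R=\mathbb{Z}[x_1,\dots,x_l]$, $I=J(\A)_\mathbb{Z}$, and set $M=(R/I)\otimes_\mathbb{Z}\mathbb{Q}$ and $M_p=(R/I)\otimes_\mathbb{Z}\mathbb{F}_p$. Since partial derivatives commute with reduction mod $p$, and $p$ is good for $\A$ (so $\pi_p(Q(\A))=Q(\A_p)$ and $|\A_p|=n$), one identifies $M=\mathbb{Q}[x_1,\dots,x_l]/J(\A)$ and $M_p=\mathbb{F}_p[x_1,\dots,x_l]/J(\A_p)$. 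The case $n\le 1$ (where $M=M_p=0$ and the exponents are $(1,0,\dots,0)$ on both sides) is trivial, so assume $n\ge 2$; then $J(\A)$ and $J(\A_p)$ are contained in $(x_1,\dots,x_l)$, hence $M\ne 0\ne M_p$.

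First I would prove that $\A$ is free over $\mathbb{Q}$. As $\A_p$ is free and $p\nmid n=|\A_p|$, Terao's criterion shows $M_p$ is Cohen--Macaulay of dimension $l-2$, so $\hdim(M_p)\le 2$. Since $p$ is a non-zero divisor in $R/I$, Corollary \ref{corol:bettiinequalQtoP} gives $\hdim(M)\le\hdim(M_p)\le 2$. On the other hand $\dim(M)\le l-2$, because $V(J(\A))$ is the singular locus of the reduced hypersurface $\{Q(\A)=0\}$, which has codimension at least two (the inequality already used in the proof of Theorem \ref{theo:freCMcod2}). The Auslander--Buchsbaum equality then yields
\[\depth(M)=l-\hdim(M)\ge l-2\ge\dim(M)\ge\depth(M),\]
so $M$ is Cohen--Macaulay of dimension $l-2$; by Terao's criterion (valid over $\mathbb{Q}$ since $\charact\mathbb{Q}=0$ does not divide $n$), $\A$ is free, say with exponents $(d_1,\dots,d_l)$.

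It remains to match $(d_1,\dots,d_l)$ with $(e_1,\dots,e_l)$. Since $p\nmid n$, the Euler splitting $D(\A)=S{\cdot}\delta_E\oplus D_0(\A)$ and its analogue over $\mathbb{F}_p$ show that $1$ is an exponent of both $\A$ and $\A_p$; normalise $d_1=e_1=1$, so that $D_0(\A)\cong\bigoplus_{i=2}^l S(-d_i)$ over $\mathbb{Q}[x_1,\dots,x_l]$, and likewise with the $e_i$ over $\mathbb{F}_p[x_1,\dots,x_l]$. Feeding this into the graded exact sequence from the proof of Theorem \ref{theo:freCMcod2},
\[0\to\bigoplus_{i=2}^l S(-(d_i+n-1))\to S(-(n-1))^l\to S\to M\to 0,\]
and taking the alternating sum of Hilbert series (which does not require the resolution to be minimal), one gets $H_M(t)(1-t)^l=1-l\,t^{n-1}+t^{n-1}\sum_{i=2}^l t^{d_i}$, and identically $H_{M_p}(t)(1-t)^l=1-l\,t^{n-1}+t^{n-1}\sum_{i=2}^l t^{e_i}$. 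Now the non-zero-divisor hypothesis enters again: $(R/I)\otimes_\mathbb{Z}\mathbb{Z}_{(p)}$ is torsion-free, hence flat, over the discrete valuation ring $\mathbb{Z}_{(p)}$, so each of its graded pieces is a finitely generated free $\mathbb{Z}_{(p)}$-module whose rank equals both the $\mathbb{Q}$-dimension of the corresponding piece of $M$ and the $\mathbb{F}_p$-dimension of the corresponding piece of $M_p$. Therefore $H_M=H_{M_p}$, and comparing the two displays gives $\sum_{i=2}^l t^{d_i}=\sum_{i=2}^l t^{e_i}$, i.e. $\{d_2,\dots,d_l\}=\{e_2,\dots,e_l\}$ as multisets. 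With $d_1=e_1=1$ this shows $\A$ is free in $\mathbb{Q}^l$ with exponents $(e_1,\dots,e_l)$.

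I expect the delicate point to be this last step: one must be sure that the Hilbert functions over $\mathbb{Q}$ and over $\mathbb{F}_p$ agree on the nose — which is precisely where $p$ being a non-zero divisor in $R/I$ is used, via flatness over $\mathbb{Z}_{(p)}$ — and that the Hilbert numerator read off from the $D_0(\A)$-resolution faithfully records the exponents. The homological-dimension bound established in the first step is exactly what guarantees that this resolution is short enough for the bookkeeping to be valid. (One could instead try to compare $\chi(\A,t)$ and $\chi(\A_p,t)$ via Terao's factorization theorem, but this seems more fragile, since reduction at a good prime need not preserve the intersection lattice; the Hilbert-series argument above avoids that issue entirely.)
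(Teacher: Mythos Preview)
Your argument is correct. The freeness half is essentially the paper's proof: identify $S/J(\A)$ and $S_p/J(\A_p)$ with the $\mathbb{Q}$- and $\mathbb{F}_p$-fibres of $R/J(\A)_\mathbb{Z}$, invoke Terao's criterion over $\mathbb{F}_p$, pass the bound $\hdim\le 2$ to $\mathbb{Q}$ via Corollary~\ref{corol:bettiinequalQtoP}, and close with Auslander--Buchsbaum exactly as the paper does.

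The only real divergence is in how you match the exponents. The paper observes that once $\hdim(S/J(\A))=\hdim(S_p/J(\A_p))=2$, the consecutive-cancellation mechanism of Proposition~\ref{prop:betticonsecutivcancel} forces the graded Betti tables to coincide, and then reads the exponents directly off those Betti numbers. You instead compare Hilbert series: the non-zero-divisor hypothesis makes each graded piece of $(R/I)\otimes_\mathbb{Z}\mathbb{Z}_{(p)}$ a finite free $\mathbb{Z}_{(p)}$-module, so $H_M=H_{M_p}$ on the nose, and the exponents appear in the Hilbert numerator via the $D_0$-resolution. The two routes are equivalent in spirit (graded Betti numbers determine the Hilbert series), but yours sidesteps any residual question of why equal projective dimension really pins down the full Betti table here, at the price of one extra flatness observation; the paper's route instead leans more fully on the machinery already set up in Proposition~\ref{prop:betticonsecutivcancel}.
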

\begin{proof} Denote $S=\mathbb{Q}[x_1,\dots, x_l]$, $S_p=\mathbb{F}_p[x_1,\dots, x_l]$ and $R=\mathbb{Z}[x_1,\dots ,x_l]$. 
Notice that  $S_p/J(\A_p)\cong (R/J(\A)_\mathbb{Z})\otimes_{\mathbb{Z}}\mathbb{F}_p$ and
$S/J(\A)\cong (R/J(\A)_\mathbb{Z})\otimes_{\mathbb{Z}}\mathbb{Q}$.

Clearly $S_p/J(\A_p)=0$ if and only if $S_/J(\A)=0$

Assume now that $S_p/J(\A_p)\ne0$. By Theorem \ref{theo:freCMcod2}, $S_p/J(\A_p)$ is Cohen-Macaulay of dimension
$l-2$.

By Auslander-Buchsbaum equality, we can write
$$\depth(S_p / J(\A_p))+\hdim(S_p / J(\A_p))=l=\depth(S/ J(\A))+\hdim(S/ J(\A)).$$
By Corollary \ref{corol:bettiinequalQtoP}, $\hdim(S_p/ J(\A_p))\ge\hdim(S/ J(\A))$ and hence, by the previous equality, $\depth(S_p/ J(\A_p))\le\depth(S/ J(\A))$.
Now $S_p/J(\A_p)$ is Cohen-Macaulay, hence $\depth(S_p / J(\A_p))=\dim(S_p / J(\A_p))=l-2=\dim(S/ J(\A))$. This implies that $\depth(S/ J(\A))\ge l-2=\dim(S/ J(\A))$.
On the other hand, $\depth(S/ J(\A))\le\dim(S/ J(\A))$ and hence we have that $\depth(S/ J(\A))=\dim(S/ J(\A))$. This implies that $S/J(\A)$ is Cohen-Macaulay of dimension
$l-2$, and so $\A$ is free in $\mathbb{Q}^l$.

This argument also shows that $2=\hdim(S_p/ J(\A_p))=\hdim(S/ J(\A))$, and hence, $S/ J(\A)$ have the same Betti numbers of $S_p / J(\A_p)$. Since the exponents of $\A$ can be
computed uniquely from the Betti numbers of $S/ J(\A)$, this implies that $\A$ has exponents $(e_1,\dots, e_l)$.
\end{proof}


In Theorem \ref{theo:fromPtozero}, the assumption that the prime $p$ is a non-zero divisor in $\mathbb{Z}[x_1,\dots ,x_l]/J(\A)_\mathbb{Z}$ is fundamental. In fact we have the following.
\begin{Example} Consider the arrangement $\A\subseteq\mathbb{Q}^3$ with defining equation $Q(\A)=z(x+2y-4z)(y+4z)(x+3y-6z)$. $\A$ is non free and both $2$ and $3$ are zero divisors in $\mathbb{Z}[x_1,\dots ,x_l]/J(\A)_\mathbb{Z}$.
In fact, we have that $3(y^2z^2+2yz^3-8z^4)\in J(\A)_\mathbb{Z}$ but $y^2z^2+2yz^3-8z^4\notin J(\A)_\mathbb{Z}$, and similarly 
$2(xyz^2+4y^2z^2+4xz^3+8yz^3-32z^4)\in J(\A)_\mathbb{Z}$ but $xyz^2+4y^2z^2+4xz^3+8yz^3-32z^4\notin J(\A)_\mathbb{Z}$. However, both $A_2$ and $A_3$ are free with
exponents $(1,1,2)$.
\end{Example}

By Proposition \ref{prop:finitezerodivprimes}, the number of prime numbers that are zero divisors in $\mathbb{Z}[x_1,\dots ,x_l]/J(\A)_\mathbb{Z}$ is finite. Hence, putting together Corollary \ref{corol:fromchar0tocharP} 
and Theorem \ref{theo:fromPtozero}, we have the following.
\begin{Corollary}\label{corol:equivfreenesschar0p} Let $\A$ be a central arrangement in $\mathbb{Q}^l$ and $p$ a large prime number. $\A_p$ is free in $\mathbb{F}_p^l$ with exponents $(e_1,\dots, e_l)$ if and only if
$\A$ is free in $\mathbb{Q}^l$ with exponents $(e_1,\dots, e_l)$.
\end{Corollary}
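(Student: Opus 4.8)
The statement to prove is Corollary~\ref{corol:equivfreenesschar0p}, which packages together two facts already established in the paper. The plan is simply to assemble Corollary~\ref{corol:fromchar0tocharP}, Theorem~\ref{theo:fromPtozero}, and the two finiteness results (Lemma~\ref{lem:finitenotgoodprimes} and Proposition~\ref{prop:finitezerodivprimes}) into a single ``for all large $p$'' statement, taking care that the finitely many excluded primes in each direction only depend on $\A$, not on the claimed exponents.

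First I would make precise what ``large prime number'' means here: $p$ should avoid the following finite sets of bad primes. By Lemma~\ref{lem:finitenotgoodprimes} only finitely many primes fail to be good for $\A$; there is exactly one prime (or none) dividing $n$; by the remark following Theorem~\ref{theo:fromchar0tocharP}, the proof of Corollary~\ref{corol:fromchar0tocharP} excludes a further finite list (the primes dividing the determinant constant $c$ for a fixed integral basis of $D(\A)$); and by Proposition~\ref{prop:finitezerodivprimes} only finitely many primes are zero divisors in $R/J(\A)_\mathbb{Z}$, where $R=\mathbb{Z}[x_1,\dots,x_l]$. Let $p$ be any prime outside the union of these finite sets. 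Note all these sets are determined by $\A$ alone, so ``$p$ large'' is an unconditional hypothesis.

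Then the two implications are immediate. For the forward direction, if $\A$ is free in $\mathbb{Q}^l$ with exponents $(e_1,\dots,e_l)$, then since $p$ is good and avoids the finitely many primes excluded in Corollary~\ref{corol:fromchar0tocharP}, that corollary gives that $\A_p$ is free in $\mathbb{F}_p^l$ with exponents $(e_1,\dots,e_l)$. For the reverse direction, if $\A_p$ is free in $\mathbb{F}_p^l$ with exponents $(e_1,\dots,e_l)$, then $p$ is good for $\A$, does not divide $n$, and is a non-zero divisor in $R/J(\A)_\mathbb{Z}$, so Theorem~\ref{theo:fromPtozero} applies directly and yields that $\A$ is free in $\mathbb{Q}^l$ with exponents $(e_1,\dots,e_l)$.

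There is no real obstacle; the only point requiring a little care is the logical structure of the ``if and only if'' together with the quantifier over $p$: the statement should be read as ``there is a finite set $P$ of primes, depending only on $\A$, such that for every prime $p\notin P$ the two freeness-with-given-exponents statements are equivalent.'' Since each of the four excluded finite sets depends only on $\A$ and not on any hypothesis about exponents, their union $P$ works uniformly for both directions, and the biconditional follows. I would phrase the proof in two or three sentences citing Corollary~\ref{corol:fromchar0tocharP} and Theorem~\ref{theo:fromPtozero}, with a parenthetical reminder that the relevant exceptional primes are finite in number by Lemma~\ref{lem:finitenotgoodprimes} and Proposition~\ref{prop:finitezerodivprimes}.
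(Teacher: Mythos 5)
Your proof is correct and follows exactly the paper's route: the paper likewise obtains the corollary by combining Corollary~\ref{corol:fromchar0tocharP} (forward direction) with Theorem~\ref{theo:fromPtozero} (reverse direction), using Proposition~\ref{prop:finitezerodivprimes} and Lemma~\ref{lem:finitenotgoodprimes} to guarantee that only finitely many primes are excluded. Your added care in spelling out that each finite exceptional set depends only on $\A$ is a sensible clarification but not a different argument.
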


\section{Applications}
As in Sections \ref{sec:freefromchar0tocharp} and \ref{sec:fromPtozerochar}, we will assume that $\A=\{H_1,\dots, H_n\}$ is a central arrangement in $\mathbb{Q}^l$ and hence that $\alpha_i \in\mathbb{Z}[x_1,\dots, x_l]$ for all $i=1,\dots, n$.
Moreover, we can also assume that there exists no prime number $p$ that divides any $\alpha_i$.

From Theorems \ref{theo:yoshcharpfreel} and \ref{theo:fromPtozero}, we have the following.
\begin{Theorem} Let $\A=\{H_1,\dots, H_n\}$ be a central arrangement in $\mathbb{Q}^l$. Let $p$
be a good prime number for $\A$ that does not divide $n$ and that is a non-zero divisor in $\mathbb{Z}[x_1,\dots ,x_l]/J(\A)_\mathbb{Z}$. If $\chi(\A_p,p^{l-2})=0$, then
$\A$ is free with exponents $(1,p,\dots,p^{l-2},|\A|-1-p-\cdots-p^{l-2})$.
\end{Theorem}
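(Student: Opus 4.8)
The plan is to combine Yoshinaga's Theorem \ref{theo:yoshcharpfreel} with the transfer result Theorem \ref{theo:fromPtozero}, so the argument is short and essentially a matter of checking that the hypotheses line up. First I would record the easy bookkeeping: since $p$ is a good prime for $\A$, the polynomial $\pi_p(Q(\A))$ is reduced and nonzero, and since no prime divides any $\alpha_i$ each $\pi_p(\alpha_i)$ is a nonzero linear homogeneous form; hence $\A_p$ is a genuine central arrangement in $\mathbb{F}_p^l$ with exactly $|\A_p| = n = |\A|$ hyperplanes. This last equality is the one point that deserves an explicit word, because the exponent formula in the statement is written in terms of $|\A|$, while Yoshinaga's theorem will produce exponents in terms of $|\A_p|$.

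Next I would apply Theorem \ref{theo:yoshcharpfreel} to $\A_p$. By hypothesis $\chi(\A_p, p^{l-2}) = 0$, so that theorem yields that $\A_p$ is free in $\mathbb{F}_p^l$ with exponents $(1, p, \dots, p^{l-2}, |\A_p| - 1 - p - \cdots - p^{l-2})$, which by the previous paragraph equals $(1, p, \dots, p^{l-2}, |\A| - 1 - p - \cdots - p^{l-2})$. Write $(e_1, \dots, e_l)$ for this tuple.

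Finally I would invoke Theorem \ref{theo:fromPtozero}. Its hypotheses are precisely the standing assumptions here: $p$ is good for $\A$, $p \nmid n$, and $p$ is a non-zero divisor in $\mathbb{Z}[x_1, \dots, x_l]/J(\A)_{\mathbb{Z}}$. Since we have just shown $\A_p$ is free in $\mathbb{F}_p^l$ with exponents $(e_1, \dots, e_l)$, that theorem gives that $\A$ is free in $\mathbb{Q}^l$ with the same exponents $(e_1, \dots, e_l) = (1, p, \dots, p^{l-2}, |\A| - 1 - p - \cdots - p^{l-2})$, which is the claim.

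There is really no hard step here: the content has already been isolated in Theorems \ref{theo:yoshcharpfreel} and \ref{theo:fromPtozero}, and the proof is just their composition. The only place to be mildly careful is the identification $|\A_p| = |\A|$, which is exactly what goodness of $p$ buys us and which is what makes the two exponent formulas agree; I would make sure to state it rather than leave it implicit.
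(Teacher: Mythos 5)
Your proof is correct and is exactly the paper's argument: the paper derives this statement by composing Theorem \ref{theo:yoshcharpfreel} applied to $\A_p$ with the transfer result Theorem \ref{theo:fromPtozero}, just as you do. Your explicit remark that goodness of $p$ guarantees $|\A_p|=|\A|$, so the two exponent formulas agree, is a worthwhile detail the paper leaves implicit.
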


As noted in \cite{ardila2007computing}, if $p$ is a large prime number, then $\A$ and $\A_p$ have isomorphic intersection lattices, and hence have the same characteristic polynomial. 
This fact, together with the finiteness of good prime numbers that are zero divisors in $\mathbb{Z}[x_1,\dots ,x_l]/J(\A)_\mathbb{Z}$, gives us the following.

\begin{Corollary} Let $\A$ be a central arrangement in $\mathbb{Q}^l$ and $p$
a large prime number. If $\chi(\A,p^{l-2})=0$, then $\A$ is free with exponents $(1,p,\dots,p^{l-2},|\A|-1-p-\cdots-p^{l-2})$.
\end{Corollary}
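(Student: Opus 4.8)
The plan is to derive the final Corollary by combining the previous Theorem (which upgrades Yoshinaga's Theorem \ref{theo:yoshcharpfreel} from $\mathbb{F}_p$ to $\mathbb{Q}$ under the hypotheses that $p$ is good, $p\nmid n$, and $p$ is a non-zero divisor in $R/J(\A)_\mathbb{Z}$) with three finiteness statements that together guarantee all of those hypotheses hold for every sufficiently large prime. First I would invoke the observation of \cite{ardila2007computing}: for all large primes $p$, the arrangements $\A$ and $\A_p$ have isomorphic intersection lattices $L(\A)\cong L(\A_p)$, hence the same characteristic polynomial, so $\chi(\A,p^{l-2})=0$ if and only if $\chi(\A_p,p^{l-2})=0$. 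Then I would note that the hypothesis $\chi(\A,p^{l-2})=0$ on the left-hand side is exactly what feeds into the previous Theorem once we know $\chi(\A_p,p^{l-2})=0$.

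Next I would dispatch the three conditions on $p$ one at a time, each excluding only finitely many primes. By Lemma \ref{lem:finitenotgoodprimes}, all but finitely many primes are good for $\A$. Trivially, only finitely many primes divide the fixed integer $n=|\A|$. And by Proposition \ref{prop:finitezerodivprimes} applied to the ideal $I=J(\A)_\mathbb{Z}\subseteq R=\mathbb{Z}[x_1,\dots,x_l]$, only finitely many primes are zero divisors in $R/J(\A)_\mathbb{Z}$. Taking $p$ larger than the (finite) union of these three exceptional sets — and also large enough for the lattice isomorphism of \cite{ardila2007computing} — the previous Theorem applies verbatim: from $\chi(\A_p,p^{l-2})=0$ we conclude $\A$ is free with exponents $(1,p,\dots,p^{l-2},|\A|-1-p-\cdots-p^{l-2})$.

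There is essentially no hard analytic core here; the content is entirely bookkeeping of which primes are admissible, and the real work has already been done in Theorem \ref{theo:fromPtozero}, Proposition \ref{prop:finitezerodivprimes}, and Lemma \ref{lem:finitenotgoodprimes}. The one point requiring a little care — and the step I would flag as the main (if modest) obstacle — is justifying the transfer of the characteristic-polynomial hypothesis between $\A$ and $\A_p$: one must be sure that "large prime" can be chosen uniformly so that simultaneously the lattices agree, $p$ is good, $p\nmid n$, and $p$ is a non-zero divisor in $R/J(\A)_\mathbb{Z}$. Since each constraint removes only finitely many primes, their union is finite and such $p$ exist; this is exactly the style of argument already used to pass from Theorem \ref{theo:fromPtozero} to Corollary \ref{corol:equivfreenesschar0p}, so I would phrase it in parallel to that deduction. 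With that settled, the Corollary follows immediately.
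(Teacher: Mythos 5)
Your proposal is correct and follows exactly the paper's route: the paper likewise invokes the lattice isomorphism $L(\A)\cong L(\A_p)$ for large $p$ from \cite{ardila2007computing} to transfer the hypothesis $\chi(\A,p^{l-2})=0$ to $\A_p$, and then applies the preceding Theorem after excluding the finitely many primes that are not good, divide $n$, or are zero divisors in $\mathbb{Z}[x_1,\dots,x_l]/J(\A)_\mathbb{Z}$. Nothing is missing.
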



\paragraph{\textbf{Acknowledgements}} The authors would like to thank G. Caviglia, M. Kummini, N. Nakashima and M. Yoshinaga for many helpful discussions.

\bibliography{bibliothesis}{}

\begin{thebibliography}{10}

\bibitem{abe2016divisionally-im}
T.~Abe.
\newblock Divisionally free arrangements of hyperplanes.
\newblock {\em Inventiones mathematicae}, 204(1):317--346, 2016.

\bibitem{abe2011freeness}
T.~Abe and H.~Terao.
\newblock The freeness of {S}hi--{C}atalan arrangements.
\newblock {\em European Journal of Combinatorics}, 32(8):1191--1198, 2011.

\bibitem{abe2013free}
T.~Abe and M.~Yoshinaga.
\newblock Free arrangements and coefficients of characteristic polynomials.
\newblock {\em Mathematische Zeitschrift}, 275(3-4):911--919, 2013.

\bibitem{adams1994introduction}
W.~W. Adams and P.~Loustaunau.
\newblock {\em An introduction to {G}r\"obner bases}.
\newblock Number~3. American Mathematical Soc., 1994.

\bibitem{ardila2007computing}
F.~Ardila.
\newblock Computing the tutte polynomial of a hyperplane arragement.
\newblock {\em Pacific Journal of Mathematics}, 230(1):1--26, 2007.

\bibitem{Gin-freearr}
A.~Bigatti, E.~Palezzato, and M.~Torielli.
\newblock New characterizations of freeness for hyperplane arrangements.
\newblock {\em arXiv:1801.09868}, 2018.

\bibitem{crapo1970foundations}
H.~Crapo and G.-C. Rota.
\newblock {\em On the foundations of combinatorial theory: Combinatorial
  geometries}.
\newblock MIT press Cambridge, Mass., 1970.

\bibitem{dalili2014dependence}
K.~Dalili and M.~Kummini.
\newblock Dependence of {B}etti numbers on characteristic.
\newblock {\em Communications in {A}lgebra}, 42(2):563--570, 2014.

\bibitem{eisenbud}
D.~Eisenbud.
\newblock {\em Commutative algebra with a view toward algebraic geometry},
  volume 150.
\newblock Springer, 1995.

\bibitem{orlik1980combinatorics}
P.~Orlik and L.~Solomon.
\newblock Combinatorics and topology of complements of hyperplanes.
\newblock {\em Inventiones mathematicae}, 56(2):167--189, 1980.

\bibitem{orlterao}
P.~Orlik and H.~Terao.
\newblock {\em Arrangements of hyperplanes}, volume 300 of {\em Grundlehren der
  Mathematischen Wissenschaften [Fundamental Principles of Mathematical
  Sciences]}.
\newblock Springer-Verlag, Berlin, 1992.

\bibitem{pauer1992lucky}
F.~Pauer.
\newblock On lucky ideals for {G}r\"obner basis computations.
\newblock {\em Journal of Symbolic Computation}, 14(5):471--482, 1992.

\bibitem{saito}
K.~Saito.
\newblock Theory of logarithmic differential forms and logarithmic vector
  fields.
\newblock {\em J. Fac. Sci. Univ. Tokyo Sect. IA Math.}, 27(2):265--291, 1980.

\bibitem{terao1980arrangementsI}
H.~Terao.
\newblock Arrangements of hyperplanes and their freeness {I}.
\newblock {\em J. Fac. Sci. Univ. Tokyo Sect. IA Math.}, 27(2):293--312, 1980.

\bibitem{terao1981generalized}
H.~Terao.
\newblock Generalized exponents of a free arrangement of hyperplanes and
  {S}hepherd-{T}odd-{B}rieskorn formula.
\newblock {\em Inventiones mathematicae}, 63(1):159--179, 1981.

\bibitem{terao1983free}
H.~Terao.
\newblock Free arrangements of hyperplanes over an arbitrary field.
\newblock {\em Proceedings of the Japan Academy, Series A, Mathematical
  Sciences}, 59(7):301--303, 1983.

\bibitem{yoshinaga2004characterization}
M.~Yoshinaga.
\newblock Characterization of a free arrangement and conjecture of {E}delman
  and {R}einer.
\newblock {\em Inventiones mathematicae}, 157(2):449--454, 2004.

\bibitem{yoshinaga2005freeness}
M.~Yoshinaga.
\newblock On the freeness of 3-arrangements.
\newblock {\em Bulletin of the London Mathematical Society}, 37(01):126--134,
  2005.

\bibitem{yoshinaga2007free}
M.~Yoshinaga.
\newblock Free arrangements over finite field.
\newblock {\em Proceedings of the Japan Academy, Series A, Mathematical
  Sciences}, 82(10):179--182, 2007.

\bibitem{zaslavsky1975facing}
T.~Zaslavsky.
\newblock {\em Facing up to Arrangements: Face-Count Formulas for Partitions of
  Space by Hyperplanes}, volume 154.
\newblock Memoirs American Mathematical Society, 1975.

\bibitem{ziegler1990matroid}
G.~M. Ziegler.
\newblock Matroid representations and free arrangements.
\newblock {\em Transactions of the American Mathematical Society},
  320(2):525--541, 1990.

\end{thebibliography}
\bibliographystyle{plain}

\end{document}